\documentclass[12pt]{amsart}
\usepackage{amsmath,amsthm,latexsym,amscd,amsbsy,amssymb,url}
\usepackage[all]{xypic}
\setlength{\textwidth}{5.6in}

\sloppy

 \relax


\chardef\bslash=`\\ 

\makeatletter
\def\verbatim{\interlinepenalty\@M \@verbatim
  \leftskip\@totalleftmargin\advance\leftskip2pc
  \frenchspacing\@vobeyspaces \@xverbatim}
\makeatother
\hfuzz1pc

\newtheorem{thm}{Theorem}[section]
\newtheorem{cor}[thm]{Corollary}
\newtheorem{lem}[thm]{Lemma}
\newtheorem{pro}[thm]{Proposition}
\newtheorem{que}[thm]{Question}

\theoremstyle{definition}

\theoremstyle{remark}

\numberwithin{equation}{section}


\begin{document}


\title
{Fixed-point free maps of Euclidean spaces}
\author{R.~Z.~Buzyakova}
\address{Department of Mathematics and Statistics,
The University of North Carolina at Greensboro,
Greensboro, NC, 27402, USA}
\email{rzbouzia@uncg.edu}
\author{A.~Chigogidze}
\address{Department of Mathematics and Statistics,
The University of North Carolina at Greensboro,
Greensboro, NC, 27402, USA}
\email{chigogidze@uncg.edu}
\keywords{fixed-point free map, colorable map}
\subjclass{54H25, 58C30}


\begin{abstract}{Our main result states that every fixed-point free continuous self-map of ${\mathbb R}^{n}$ is colorable.
This result can be re-formulated as follows:
A  continuous map $f: {\mathbb R}^{n}\to {\mathbb R}^{n}$ is fixed-point free iff
$\widetilde f: \beta {\mathbb R}^{n}\to \beta {\mathbb R}^{n}$ is fixed-point free.
We also obtain a generalization of this fact and present some examples.}
\end{abstract}

\maketitle
\markboth{R.Z.Buzyakova, A. Chigogidze}{Fixed-point free maps}
{ }

\section{Introduction}\label{S:intro}

It is known that for a continuous map $f \colon X \to X$ the set of all fixed points 
$\operatorname{Fix}(\widetilde{f})$ of its Stone-\v{C}ech extension 
$\widetilde{f} \colon \beta X \to \beta X$ may differ from 
$\operatorname{cl}_{\beta X}\operatorname{Fix}(f)$. In particular, $\operatorname{Fix}(\widetilde{f})$ 
could be non-empty for a fixed-point free $f$,
which is equivalent to  non-colorability of $f$ if $X$ is normal. Since the just used notion of colorability may not be
widely known let us define it.
\par\bigskip\noindent
{\bf Definition of Colorability.} {\it
$f: X\subset Y\to Y$ is {\it colorable} if one can 
cover $X$ by finitely many closed sets each of which misses its image under $f$.
}
\par\bigskip\noindent
Metric examples (constructed by Krzysztof Mazur and van Douwen) of fixed-point 
free non-colorable maps can be found 
in \cite{K}, \cite{D}, \cite{KS}, \cite{VM2}. 
It is interesting to note that these examples are either infinite-dimensional or non-locally compact. 
On the other hand, it is known that: (a) every fixed-point free autohomeomorphism $f$ of a 
finite-dimensional paracompact space $X$ has the fixed-point free extension $\widetilde{f}$ 
(see \cite{D}), and (b) every continuous fixed-point free map $f$ of a zero-dimensional sigma-compact 
space $X$ has the  fixed-point free extension $\widetilde{f}$ (see \cite{KS}). We also mention here 
recently obtained result \cite{R} stating colorability of {\it every} continuous fixed-point free self-map of the real line ${\mathbb R}$.  

Below we show (Theorem \ref{T:EUCLIDEAN}) that every continuous fixed-point free self-map of the Euclidean space 
${\mathbb R}^{n}$ is colorable. In fact, we show more by considering not only 
self-maps $f \colon X \to X$, but also maps $f \colon X \to Y$, defined on closed subspaces $X$ of $Y$. 
This version shows itself helpful in various situations and in our opinion is of  interest. The most 
general statement we present in this paper is Theorem \ref{THEOREM:GENERAL} which states that every 
continuous fixed-point free map $f \colon X \to Y$, defined on a closed subspace $X$ of an at most $n$-dimensional 
locally compact and paracompact space $Y$, is colorable in at most $n+3$ colors. 
We also note that a continuous map $f \colon X \to Y$, defined on a closed subspace $X$ of an at most $n$-dimensional 
locally compact and paracompact space $Y$, is fixed-point free iff $\widetilde f: \beta X\to \beta Y$ is fixed-point free. 
In Section \ref{S:examples}, 
we construct a non-colorable fixed-point free autohomeomorphism of the separable Hilbert space $\ell_{2}$ and 
a non-colorable continuous fixed-point free self-map of the universal $n$-dimensional N\"{o}beling space $N_{n}^{2n+1}$. 
The case $n=0$ produces (and uses) the same example as that of K. Mazur's since $N_{0}^{1}$ is the space of 
irrationals. We conclude by proving (Proposition \ref{P:iff}) that similar examples exist on every zero-dimensional 
non-sigma-compact Polish space. 

A good account of the results related to this subject as well as historical comments are given in \cite[Section 3.2]{VM2}. 
Our notation and terminology related to absolute extensors in dimension $n$, $n$-soft maps, universal N\"{o}beling 
spaces and inverse spectra follow \cite{chibook}.

When discussing the $n$-th power ${\mathbb R}^n$ of ${\mathbb R}$ we agree
that ${\mathbb R}^0$ is the singleton $\{0\}$. 
We finish the introduction by defining colorability-related concepts used in the paper. Once again,
a continuous map $f:X\to Y$ is {\it colorable} if there exists a finite 
cover $\mathcal F$ of $X$ by its closed subsets such that $f(F)$ misses $F$ for every $F\in {\mathcal F}$.
If $\mathcal F$ has $n$ elements we say that $f$ is {\it colorable in $n$ colors}. Every closed
set $F\subset X$ with the property that $f(F)\cap F=\emptyset $ is a {\it color} (for $f$).
All maps under discussion are continuous.

\section{Fixed-point free maps of Euclidean spaces}\label{S:es}

\par\bigskip
The following lemma consists of common facts about colorable maps that are proved implicitly for self-maps in other
papers on the subject. Since the proof is a  straightforward verification we will only sketch it.

\par\bigskip\noindent
\begin{lem}\label{L:COMMONFACTS}
Let $X$ be a closed subspace of $Z$ and $f:X\to Z$ a continuous map.
\begin{enumerate}
	\item If $Z$ is normal, $A$ is  a color, and $A$ misses $\overline {f(A)}$; then there is an open neighborhood $U$
of $A$ whose closure is a color.
	\item If $A\subset Z$ and $f(A)\cap A=\emptyset$ then $f^{-1}(A)\cap A=\emptyset$.

	\item If $\mathcal G$ is a closed cover of $Z$ such that $f(G)\cap G=\emptyset$
then 
${\mathcal F}=\{f^{-1}(G): G\in {\mathcal G}\}$ is a cover of $X$ by colors and $\overline {f(F)}\cap F=\emptyset$
for all $F\in {\mathcal F}$.
\end{enumerate}
\end{lem}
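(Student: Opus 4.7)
My plan is to handle the three items in order of increasing complexity. Part (2) is a one-line chase: if some $x$ lies in $f^{-1}(A)\cap A$, then $x\in A$ gives $f(x)\in f(A)$, while $x\in f^{-1}(A)$ gives $f(x)\in A$, contradicting $f(A)\cap A=\emptyset$.

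For part (1), I would exploit normality of $Z$. Since $A$ is closed in $X$ and $X$ is closed in $Z$, $A$ is closed in $Z$; together with its disjointness from $\overline{f(A)}$, two applications of normality produce open sets $V_{1}\supseteq A$ and $V_{2}\supseteq \overline{f(A)}$ in $Z$ with $\overline{V_{1}}\cap \overline{V_{2}}=\emptyset$. I would then set
\[
U=V_{1}\cap X\cap f^{-1}(V_{2}),
\]
so that $U$ is open in $X$ and contains $A$ (since $f(A)\subseteq V_{2}$). Continuity of $f$ gives $f(\overline{U})\subseteq \overline{f(U)}\subseteq \overline{V_{2}}$, while $\overline{U}\subseteq \overline{V_{1}}$, where all closures may be computed in either $X$ or $Z$ because $X$ is closed in $Z$. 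The disjointness of $\overline{V_{1}}$ and $\overline{V_{2}}$ then forces $f(\overline{U})\cap \overline{U}=\emptyset$, so $\overline{U}$ is a color.

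For part (3), continuity makes each $f^{-1}(G)$ closed in $X$, and $\bigcup_{G\in\mathcal{G}}f^{-1}(G)=f^{-1}(Z)=X$, so $\mathcal{F}$ covers $X$. For any $F=f^{-1}(G)$, closedness of $G$ yields $\overline{f(F)}\subseteq \overline{G}=G$, whence $\overline{f(F)}\cap F\subseteq G\cap f^{-1}(G)=\emptyset$ by part (2) applied to $G$; in particular $F$ is a color. The only step that needs any thought is the construction of $U$ in part (1): intersecting only with $V_{1}\cap X$ does not give any control over $f(\overline{U})$, and the essential trick is the additional intersection with $f^{-1}(V_{2})$, which forces $\overline{U}$ and $f(\overline{U})$ into disjoint closed sets.
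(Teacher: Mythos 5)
Your proof is correct and, for parts (2) and (3), follows essentially the same route as the paper (reduce the color property of $F=f^{-1}(G)$ to $G\cap f^{-1}(G)=\emptyset$ via part (2) and the closedness of $G$). The paper's sketch omits part (1) entirely as a ``straightforward verification,'' and your argument for it --- separating the closed sets $A$ and $\overline{f(A)}$ by open sets with disjoint closures and taking $U=V_{1}\cap X\cap f^{-1}(V_{2})$ so that $\overline{U}\subseteq\overline{V_{1}}$ while $f(\overline{U})\subseteq\overline{V_{2}}$ --- is exactly the right way to fill that gap.
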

\begin{proof}
To prove 2, assume the contrary and pick $x$ in $f^{-1}(A)\cap A=\emptyset$. Since $f^{-1}(A)$ is a subset
of $X$, the value $f(x)$ is defined. Hence $f\circ f^{-1}(A)$ meets $f(A)$, contradicting $f(A)\cap A=\emptyset$.

To prove 3, apply 2 to conclude that $f^{-1}(G)$ is a color. Since $\mathcal G$ is a cover of $Z$ and $f$ is defined on $X$ only,
${\mathcal F}$ is a cover of $X$. Finally, let us show that $\overline {f(F)}\cap F=\emptyset$.
We have $F = f^{-1}(G)$ for $G\in {\mathcal G}$. Then $\overline {f(F)}\cap F$ equals 
$\overline {f\circ f^{-1}(G)}\cap f^{-1}(G)$, which is a subset of $\overline G\cap f^{-1}(G)$. Since $G$ is closed
$\overline G\cap f^{-1}(G)$ equals $G\cap f^{-1}(G)$, which is empty by 2.
\end{proof}

\par\bigskip
We will use the above facts quite often without formally referring to the statement.

\par\bigskip\noindent

\begin{lem}\label{L:ENLARGE}
Let $X$ be a normal space of dimension at most $n$. Suppose $\{A_i,B_i:i\leq n+1\}$ is a 
family of functionally closed sets such that $A_i\cap B_i=\emptyset$.
Then there exists a functionally closed cover $\{\tilde A_i,\tilde B_i:i\leq n+1\}$ of $X$
with the following properties:
\begin{enumerate}
	\item $\tilde A_i\cap \tilde B_i =\emptyset$; and
	\item $\tilde A_i\cap Z= A_i$ and $\tilde B_i\cap Z=B_i$, where $Z=\bigcup\{A_i,B_i:i\leq n+1\}$. 
\end{enumerate}
\end{lem}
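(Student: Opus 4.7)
The plan is to reduce the enlargement problem to an extension problem into a sphere. First, I would package the given disjoint zero set pairs $(A_i,B_i)$ into a single continuous map $F\colon X\to[-1,1]^{n+1}$ whose restriction to $Z$ already lies in the boundary of the cube; the dimension hypothesis will then let me extend that boundary-valued restriction over all of $X$, and the coordinates of the extension will yield the desired $\tilde A_i$ and $\tilde B_i$ by a preimage definition.

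Concretely, for each $i$ I would use functional closedness of $A_i$ and $B_i$ to pick continuous $g_i,h_i\colon X\to[0,\infty)$ with $A_i=g_i^{-1}(0)$ and $B_i=h_i^{-1}(0)$. Since $A_i\cap B_i=\emptyset$, we have $g_i+h_i>0$ throughout $X$, so $f_i=(g_i-h_i)/(g_i+h_i)$ is a well-defined continuous map $X\to[-1,1]$ satisfying the \emph{exact} preimage identities $f_i^{-1}(-1)=A_i$ and $f_i^{-1}(1)=B_i$. Assembling these, the product $F=(f_1,\dots,f_{n+1})\colon X\to[-1,1]^{n+1}$ sends every $y\in Z$ into $\partial[-1,1]^{n+1}$, because $y\in A_i\cup B_i$ forces $f_i(y)=\pm 1$ for that $i$.

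The central step is to extend $F|_Z\colon Z\to \partial[-1,1]^{n+1}\cong S^n$ to a continuous map $F'\colon X\to\partial[-1,1]^{n+1}$; this is exactly what the classical Aleksandrov--Hopf characterization of covering dimension provides, since for a normal space $X$ with $\dim X\le n$ and closed $Z\subseteq X$ every continuous map $Z\to S^n$ extends over $X$. Writing $F'=(f'_1,\dots,f'_{n+1})$, I would then set
\[
\tilde A_i=(f'_i)^{-1}(-1),\qquad \tilde B_i=(f'_i)^{-1}(1).
\]
These are functionally closed as preimages of singletons under continuous real-valued maps; they are disjoint for each fixed $i$; they cover $X$ because $F'(X)\subseteq\partial[-1,1]^{n+1}$ forces some coordinate of $F'(y)$ to equal $\pm 1$ for every $y$; and the identities $\tilde A_i\cap Z=A_i$, $\tilde B_i\cap Z=B_i$ fall out of $F'|_Z=F|_Z$ together with the exact preimage property of the $f_i$.

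The principal obstacle is the sphere-extension step; everything else is set-theoretic bookkeeping. This is the only place where the dimension hypothesis is used, and it is used essentially: without $\dim X\le n$ one cannot guarantee that the boundary-valued map on $Z$ can be completed to a map into $\partial[-1,1]^{n+1}$ rather than one that wanders through the interior of the cube, and the interior values are what would destroy the disjointness structure we need between $\tilde A_i$ and $\tilde B_i$.
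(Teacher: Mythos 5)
Your proposal is correct and follows essentially the same route as the paper: both form the diagonal product of coordinate functions realizing each pair $(A_i,B_i)$ as exact preimages of the two endpoints, observe that $Z$ lands in the boundary of the cube, extend over $X$ using the fact that $\partial I^{n+1}\simeq S^n$ is an absolute extensor in dimension $n$ for normal spaces (the Aleksandrov--Hopf characterization you invoke), and read off $\tilde A_i,\tilde B_i$ as preimages of opposite faces. The only cosmetic difference is your use of $[-1,1]$ in place of $[0,1]$ and your more explicit verification of the exact-preimage bookkeeping.
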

\begin{proof}
For each $i = 1,\dots, n+1$, consider a map $f_{i} \colon X \to I=[0,1]$ such that 
$f^{-1}_{i}(0) = A_i$ and $f^{-1}_{i}(1) = B_i$. The diagonal product 
$f = \triangle f_{i} \colon X \to [0,1]^{n+1}$ sends the union $\cup(A_{i}\cup B_{i})$ 
into $\partial I^{n+1}$. Since $\dim X \leq n$ and since $\partial I^{n+1} \simeq S^{n}$ 
is an absolute extensor in dimension $n$ for normal spaces, it follows that the map 
$f|\cup(A_{i}\cup B_{i}) \colon \cup(A_{i}\cup B_{i}) \to \partial I^{n+1}$ has an 
extension $g \colon X \to \partial I^{n+1}$. The sets $\widetilde{A}_{i}$, $\widetilde{B}_{i}$ 
can now be defined as inverse images under the map $g$ of the opposite faces of $\partial I^{n+1}$. 
The needed properties hold by construction.
\end{proof}

\par\bigskip
To prove our main result for ${\mathbb R}^{n}$ and a generalization to n-dimensional locally compact paracompact spaces
we encounter several cases that can be handled by one approach realized in 
Lemma \ref{L:GOOD}. To formulate the lemma to a required generality we need the following definition.


\par\bigskip\noindent
{\bf Definition of Favorable Representation.} {\it
A family $\{X_\alpha: \alpha\leq \tau ,\ \alpha\ is\ isolated\}$ is a favorable representation of a space $X$ if
all of the following hold:

\begin{enumerate}
	\item $X = \bigcup_{\alpha\leq\tau}X_\alpha$;
	\item If $A\cap X_\alpha$ is functionally closed in $X_\alpha$ for all isolated $\alpha\leq \tau$, then $A$ is functionally closed in $X$;
	\item If $X_\alpha\cap X_\beta \not =\emptyset$, then $\alpha=\beta +1$ or $\beta = \alpha+1$;
	\item The union of any subcollection of $\{X_\alpha: \alpha\leq \tau ,\ \alpha\ is\ isolated\}$
is functionally closed in $X$. 
\end{enumerate}
}
\par\bigskip
A much simpler version of 
Lemma \ref{L:GOOD} for the real line is proved in \cite[Lemma 2.1]{R}. To prove our generalized version
we introduce three technical definitions that will be  used only to prove Lemma \ref{L:GOOD}.
In what follows when dealing with a favorable representation
$\{X_\alpha: \alpha\leq \tau ,\ \alpha\ is\ isolated\}$ we always assume that $X_\alpha$ is empty
if $\alpha\leq \tau$ is limit.
\par\bigskip\noindent
{\bf Definition of Odd Ordinal.} {\it
An ordinal $\alpha$ will be called odd if $|\{\beta<\alpha: |\alpha\setminus \beta |\ is\ finite\}|$ is an
odd number.
}

\par\bigskip\noindent
{\bf Definition of $\beta$-Coloring.} {\it
Let $\{Y_\alpha: \alpha\leq \tau,\ \alpha\ is\ isolated\}$ be a favorable representation of
$Y$; ${\rm dim} Y \leq n$; and $f:X\subset Y\to Y$. We say that
$\{A_i,B_i:i\leq n+1\}$ is a $\beta$-coloring of $f$ if all of the following hold:
\begin{description}
	\item[\rm C1] $\{A_i, B_i: i\leq n+1\}$ is a functionally closed cover of $\bigcup_{\gamma\leq \beta}Y_\gamma$.
	\item[\rm C2] $A_i\cap B_i=\emptyset$;
	\item[\rm C3] $f(A_i)\cap A_i =\emptyset$ and $f(B_i)\cap B_i =\emptyset$.
\end{description}
}

\par\bigskip\noindent
{\bf Definition of Agreed Colorings.} {\it
Let $\{Y_\alpha: \alpha\leq \tau,\ \alpha\ is\ isolated\}$ be a favorable representation of
$Y$; ${\rm dim} Y\leq n$; $f:X\subset Y\to Y$; and
$\{A_i^\gamma,B_i^\gamma:i\leq n+1\}$ and $\{A_i^\beta,B_i^\beta:i\leq n+1\}$ are  $\gamma$- and $\beta$-coloring of $f$
with $\gamma+2\leq \beta$. We say that the colorings agree if all of the following hold:
\begin{description}
	\item[\rm A1] If $\gamma$ and $\beta$ are odd then $A_i^\gamma\subset A_i^\beta$ and $B_i^\gamma\subset B_i^\beta$;
	\item[\rm A2] If $\beta$ is odd and $\gamma'$ is the smallest
odd ordinal greater than or equal to $\gamma$,  then $A_i^\beta\cap Y_\gamma = A_i^{\gamma'+2}\cap Y_\gamma$ and 
$B_i^\beta\cap Y_\gamma = B_i^{\gamma'+2}\cap Y_\gamma$.
\end{description}
}
\par\bigskip\noindent
Observe that in A2 of the above definition, $\gamma'+2\leq \beta$ because $\beta $ is odd and $\beta\geq \gamma +2$.
What the requirement A2 states is that $A_i^\beta$ and $A_i^\alpha$ cut out the same piece
from $Y_\gamma$ as long as $\alpha$ and $\beta$ are odd and at least two  ordinals above $\gamma$. This
property together with 2 of favorable representation will be used below to show
that the union of $A_i^\beta$'s over add $\beta$ is closed.

\par\bigskip\noindent
\begin{lem}\label{L:GOOD4LIMIT}
Let $\{Y_\alpha: \alpha\leq \tau,\ \alpha\ is\ isolated\}$ be a favorable representation of a normal space $Y$
of dimension at most $n$. Let $X$ be a closed subspace of $Y$
and $f:X\to Y$ a continuous map.
Let $\alpha$ be limit and $\{\{A_i^\beta, B_i^\beta:i\leq n+1\}: \beta<\alpha\}$ be a collection of $\beta$-colorings of
$f$ that agree with each other. Then the following sets form an $\alpha$-coloring of $f$
that agrees with all $\beta$-colorings in the given collection:
$$
A_i^\alpha=\bigcup\{B_i^\beta: \beta<\alpha, \beta\ is\ odd\},\ B_i^\alpha=\bigcup\{A_i^\beta: \beta<\alpha, \beta\ is\ odd\},\ {\rm where} \ i\leq n+1.
$$
\end{lem}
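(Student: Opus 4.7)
The plan is to verify the four coloring conditions C1--C3 for the proposed sets and then check agreement with every previously given $\beta$-coloring. A useful preliminary observation is that \emph{any limit ordinal $\alpha$ is even in the sense of this paper}: for every $\beta<\alpha$ the complement $\alpha\setminus\beta$ is infinite, so the set counted in the definition of odd is empty. Consequently, for each $\delta<\alpha$ with $\delta+2\le\alpha$, both A1 (which requires both ordinals odd) and A2 (which requires the larger ordinal odd) are vacuous, and the agreement clause of the conclusion requires nothing to verify.

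Conditions C2 and C3 follow immediately from A1 applied diagonally. For C2, expand
\[
A_i^\alpha\cap B_i^\alpha=\bigcup_{\beta,\beta'<\alpha,\ \beta,\beta'\ \mathrm{odd}}\bigl(B_i^\beta\cap A_i^{\beta'}\bigr).
\]
When $\beta=\beta'$, C2 for that coloring kills the term. When $\beta\ne\beta'$, the two odd ordinals cannot be consecutive, so one exceeds the other by at least $2$; then A1 nests the smaller coloring inside the larger and the intersection collapses back to the diagonal case. The argument for C3 is identical after prefixing one factor by $f$ and invoking C3 of the larger coloring in each comparison.

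The heart of the proof is C1. Covering is easy: given $y\in\bigcup_{\gamma\le\alpha}Y_\gamma$, we have $y\in Y_\gamma$ for some isolated $\gamma<\alpha$ (since $Y_\alpha=\emptyset$). Let $\gamma'$ be the smallest odd ordinal $\ge\gamma$; because $\alpha$ is a limit, $\gamma'+2<\alpha$, and C1 for the $(\gamma'+2)$-coloring places $y$ in some $A_i^{\gamma'+2}$ or $B_i^{\gamma'+2}$, which by construction is contained in $B_i^\alpha$ or $A_i^\alpha$ respectively. For functional closedness I intend to apply property~(2) of favorable representation, which reduces the task to verifying that $A_i^\alpha\cap Y_\delta$ (and likewise $B_i^\alpha\cap Y_\delta$) is functionally closed in $Y_\delta$ for every isolated $\delta\le\tau$. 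Property~(3) of favorable representation forces the intersection to vanish when $\delta>\alpha$, so only $\delta<\alpha$ matters. For such $\delta$, property~(3) again kills every odd $\beta<\delta-1$, the odd $\beta\in\{\delta-1,\delta,\delta+1\}$ contribute at most three functionally closed terms (each $B_i^\beta$ is functionally closed in $Y$ by C1 for that coloring combined with property~(4)), and A2 collapses all odd $\beta\ge\delta+2$ to the single set $B_i^{\delta'+2}\cap Y_\delta$, where $\delta'$ is the smallest odd ordinal $\ge\delta$. A finite union of functionally closed sets is functionally closed, completing C1.

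The main obstacle is exactly this last reduction: showing that the transfinite union defining $A_i^\alpha$ preserves functional closedness. The design of A2, together with the ``chain'' condition~(3) of favorable representation, is tailored for this collapse---once one sees that each cell $Y_\delta$ receives only finitely many genuinely distinct contributions, the rest is routine bookkeeping.
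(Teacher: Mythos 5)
Your proposal is correct and follows essentially the same route as the paper's proof: A1--A2 are vacuous since a limit ordinal is not odd, C2 and C3 reduce to the diagonal case via A1, covering follows from C1 of any sufficiently large odd coloring, and functional closedness is checked cellwise via property (2) of favorable representation with the transfinite union collapsed to finitely many terms using A2 (the paper absorbs the initial segment by A1 where you instead discard it by property (3), a cosmetic difference). No gaps.
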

\begin{proof}
Neither A1 nor A2 in the definition of agreed colorings
needs verification since being limit $\alpha$ is not odd.
Let us verify C1-C3 in the definitions of $\alpha$-coloring.
For C1, recall that $Y_\alpha =\emptyset$. Therefore, if $x\in \bigcup_{\beta\leq \alpha}Y_\beta$
then $x\in Y_\beta$ for $\beta<\alpha$. Let $\gamma<\alpha$ be an odd ordinal greater than $\beta$.
Then by C1 for $\gamma$, we have $x\in \bigcup\{A_i^\gamma, B_i^\gamma:i\leq n+1\}$. Thus, $\{A_i^\alpha, B_i^\alpha:i\leq n+1\}$
is a cover of $\bigcup_{\beta\leq \alpha}Y_\beta$. Let us show that $A_i^\alpha$ is functionally closed in $Y$.
By property 2 of favorable representations we need to fix an isolated $\gamma$ and  show that $Y_\gamma\cap A_i^\alpha$ is functionally
closed in $Y_\gamma$. 
The inclusion $A_i^\alpha\subset \bigcup_{\beta\leq\alpha}Y_\beta$, the equality $Y_\alpha=\emptyset$, and 
3 of favorable representation imply that $A_i^\alpha$ may meet only $Y_\gamma$ with $\gamma<\alpha$.
We have $A_i^\alpha\cap Y_\gamma = \bigcup \{B_i^\beta\cap Y_\gamma: \beta<\alpha, \beta\ is\ odd\}$. This set  can be written
as the union of two sets
$T_1=\bigcup \{B_i^\beta\cap Y_\gamma: \gamma'+2\leq\beta<\alpha, \beta\ is\ odd\}$ and  
$T_2=\bigcup \{B_i^\beta\cap Y_\gamma: \beta\leq\gamma'+2, \beta\ is\ odd\}$,
where $\gamma'$ is the smallest odd ordinal greater than or equal to $\gamma$.
The set $T_1$ is $B_i^{\gamma'+2}\cap Y_\gamma$ by A2 for ordinals below $\alpha$.
The set $B_i^{\gamma'+2}$ is functionally closed by inductive assumption C1. Therefore,
$B_i^{\gamma'+2}\cap Y_\gamma$ is functionally closed in $Y_\gamma$.
The set $T_2$ is $B_i^{\gamma'+2}\cap Y_\gamma$ by A1 for ordinals below $\alpha$ and is functionally closed for the same
reasons as the first one. Thus,
$A_i^\alpha\cap Y_\gamma$ is the union of two functionally closed sets in $Y_\gamma$.

For C2, we need to show that $A_i^\beta\cap B_i^\gamma=\emptyset$ for odd $\beta,\gamma<\alpha$.
Assume $\gamma\geq \beta$. By A1 for $\gamma$, we have $A_i^\beta\subset A_i^\gamma$. By C2 for $\gamma$,
we have $A_i^\gamma\cap B_i^\gamma=\emptyset$.

For C3, we need to show that $f(B_i^\beta)\cap B_i^\gamma=\emptyset$ for odd $\beta,\gamma<\alpha$.
Assume $\gamma\geq \beta$. By A1 for $\gamma$, we have $B_i^\beta\subset B_i^\gamma$. By C3 for $\gamma$,
we have $f(B_i^\gamma)\cap B_i^\gamma=\emptyset$.
\end{proof}

\par\bigskip\noindent
\begin{lem}\label{L:GOOD}
Let $\{Y_\alpha: \alpha\leq \tau,\ \alpha\ is\ isolated\}$ be a favorable representation of a normal space $Y$
of dimension at most $n$. Let $X$ be a closed subspace of $Y$
and $f:X\to Y$ a fixed-point free continuous map with the following properties:
\begin{enumerate}
	\item $X$ misses $Y_0$; and
	\item If $z\in Y_\alpha\cap X$ then $f(z)\in [\bigcup_{\beta<\alpha} Y_\beta]\setminus Y_\alpha$.
\end{enumerate}
Then there exists a $(2n+2)$-sized coloring $\mathcal F$ of $f$  such that $cl_{Y}(f(F))\cap F=\emptyset$
for every $F\in {\mathcal F}$.
\end{lem}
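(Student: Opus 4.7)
The plan is to build, by transfinite induction on isolated $\alpha\leq \tau$, a system of $\alpha$-colorings $\{A_i^\alpha,B_i^\alpha:i\leq n+1\}$ that pairwise agree in the sense of A1 and A2. When the induction terminates, $\{A_i^\tau,B_i^\tau\}$ is a functionally closed cover of $Y$ satisfying C1--C3, and Lemma~\ref{L:COMMONFACTS}(3) converts it into the desired $\mathcal F=\{f^{-1}(A_i^\tau),\,f^{-1}(B_i^\tau):i\leq n+1\}$, a closed cover of $X$ with $2n+2$ members and with $\overline{f(F)}\cap F=\emptyset$ for each $F\in\mathcal F$.

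The base $\alpha=0$ is immediate: hypothesis~(1) gives $X\cap Y_0=\emptyset$, so taking $A_1^0=Y_0$ and all other $A_i^0,B_i^0$ empty makes C1--C3 vacuously true. Limit ordinals are handled by Lemma~\ref{L:GOOD4LIMIT}, which already verifies C1--C3 and the agreement with all earlier colorings.

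The successor case $\alpha=\beta+1$ is the heart of the proof. Hypothesis~(2) places $f(z)$, for each $z\in Y_\alpha\cap X$, inside $\bigcup_{\gamma\leq\beta}Y_\gamma\setminus Y_\alpha$, which is already covered by the $\beta$-coloring; we then try to send $z$ to the color opposite the one containing $f(z)$. Concretely, inside $Y_\alpha$ (which by property~4 is functionally closed in $Y$, hence normal of dimension at most $n$) define
\[
P_i'=(A_i^\beta\cap Y_\alpha)\cup (f^{-1}(B_i^\beta)\cap Y_\alpha),\qquad Q_i'=(B_i^\beta\cap Y_\alpha)\cup (f^{-1}(A_i^\beta)\cap Y_\alpha).
\]
The pair $(P_i',Q_i')$ is functionally closed and disjoint: disjointness follows from C2 and C3 for $\beta$, since e.g.\ $z\in f^{-1}(B_i^\beta)\cap A_i^\beta$ would force $f(z)\in f(A_i^\beta)\cap B_i^\beta=\emptyset$. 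Apply Lemma~\ref{L:ENLARGE} inside $Y_\alpha$ to extend the $n+1$ pairs $(P_i',Q_i')$ to a functionally closed cover $\{\hat P_i,\hat Q_i\}$ of $Y_\alpha$ with $\hat P_i\cap\hat Q_i=\emptyset$, and put $A_i^\alpha=A_i^\beta\cup\hat P_i$, $B_i^\alpha=B_i^\beta\cup\hat Q_i$.

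Conditions C1 and C2 are straightforward (for C2, observe $A_i^\beta\cap\hat Q_i\subset (A_i^\beta\cap Y_\alpha)\cap\hat Q_i\subset P_i'\cap\hat Q_i=\emptyset$). For C3 on $A_i^\alpha$, given $z\in A_i^\alpha\cap X$, one first argues $\hat P_i\cap X\subset P_i'$ (a point $z$ outside $Z=\bigcup_j(P_j'\cup Q_j')$ would have $f(z)\notin \bigcup_{\gamma\leq\beta}Y_\gamma$, contradicting hypothesis~(2)), so either $z\in A_i^\beta$ (and $f(z)\notin A_i^\beta$ by the inductive C3) or $f(z)\in B_i^\beta$, and in either case $f(z)\notin A_i^\beta$; moreover, hypothesis~(2) applied to whichever $Y_\gamma\ni z$ rules out $f(z)\in Y_\alpha$ via property~3, so $f(z)\notin\hat P_i$. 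Agreement conditions A1 and A2 hold because the construction only adds to $A_i^\alpha$ and $B_i^\alpha$ at each successor, and by property~3 any addition $\hat P_i^{(\delta)}\subset Y_\delta$ misses $Y_\gamma$ whenever $|\delta-\gamma|\geq 2$. The main obstacle is ensuring compatibility between this successor construction and the swap formula of Lemma~\ref{L:GOOD4LIMIT} at limit stages; the parity bookkeeping in the odd/even definitions is precisely what keeps A2 stable through these swaps.
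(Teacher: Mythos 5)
Your overall architecture matches the paper's (transfinite induction on isolated $\alpha$, Lemma~\ref{L:ENLARGE} at successors, Lemma~\ref{L:COMMONFACTS}(3) at the end), and your successor step is a legitimate variant of the paper's: instead of forming $C_i^\alpha=[f^{-1}(A_i^\beta)\cap Y_\alpha]\cup B_i^\beta$ and extending over all of $\bigcup_{\gamma\leq\alpha}Y_\gamma$ --- which forces the paper to \emph{swap} the two colors at every successor --- you extend only inside $Y_\alpha$ and then union with the old colors, so that $A_i^\beta\subset A_i^{\beta+1}$. Your local verifications of S-type properties and of C1--C3 at successors are essentially correct. The problem is that this non-swapping successor rule is incompatible with the limit rule you invoke. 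Lemma~\ref{L:GOOD4LIMIT} sets $A_i^\alpha=\bigcup\{B_i^\beta:\beta<\alpha,\ \beta\ \text{odd}\}$, i.e.\ it swaps; its proof of C2 and C3 needs A1 in the odd-ordinal sense, namely $A_i^\beta\subset A_i^\gamma$ for odd $\beta\leq\gamma$. In your scheme this fails one step past the first limit: for odd $\beta<\omega$ one has $A_i^{\omega+1}\supset A_i^\omega\supset B_i^\beta$, and $A_i^{\omega+1}$ is \emph{disjoint} from $A_i^\beta$ rather than containing it. Consequently at the second limit ordinal the set $A_i^{\omega\cdot 2}\cap B_i^{\omega\cdot 2}$ contains $B_i^\beta\cap A_i^{\omega+1}\supset B_i^\beta$, so C2 itself fails there and the induction collapses. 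Your closing sentence names exactly this compatibility issue as ``the main obstacle'' but offers no resolution; the claim that ``A1 and A2 hold because the construction only adds at each successor'' is false once a limit stage (which swaps) has occurred.

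The gap is repairable in either of two ways, but one of them must actually be carried out. Either (a) adopt the paper's successor rule, in which the new $A$-color absorbs the old $B$-color; then A1 and A2 hold along odd ordinals and Lemma~\ref{L:GOOD4LIMIT} applies verbatim --- this is precisely what the odd/even bookkeeping is for. Or (b) keep your monotone successor rule but abandon Lemma~\ref{L:GOOD4LIMIT} and take plain unions $A_i^\alpha=\bigcup_{\beta<\alpha}A_i^\beta$ at limits; then C2 and C3 follow from monotonicity, and functional closedness follows because property~3 of favorable representations forces the trace $A_i^\beta\cap Y_\gamma$ to stabilize at $\beta=\gamma+1$ (the additions at stage $\delta\geq\gamma+2$ live in $Y_\delta$, which misses $Y_\gamma$), after which property~2 applies. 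Option (b) would in fact be somewhat cleaner than the paper's argument, but as written your proposal is stranded between the two and does not constitute a complete proof. A smaller shared caveat: like the paper, you treat $f^{-1}(B_i^\beta)\cap Y_\alpha$ as functionally closed in $Y_\alpha$, which requires a word of justification since a zero set of the closed subspace $X$ need not a priori be a zero set of $Y$; this is not a defect specific to your write-up.
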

\begin{proof}
Inductively, we will construct building blocks for our colors.

\par\bigskip\noindent
\underline {\it Step 0}: Put $A_1^0=Y_0$, $A_i^0 = \emptyset$ if $1<i\leq n+1$, and $B_i^0=\emptyset$ if $i\leq n+1$.

\par\bigskip\noindent
\underline {\it Induction Assumption}: Assume that for $\beta <\alpha$ we have defined 
families $\{A_i^\beta, B_i^\beta: i\leq n+1\}$ that are $\beta$-colorings of $f$ and agree with each other.

\par\bigskip\noindent
Observe that for $\beta = 0$ the family $\{A_i^0,B_i^0:i\leq n+1\}$ meets C1-C3, A1, and A2
in the definitions of $\beta$-coloring and agreed colorings. Indeed, C1  holds because $A_1^0=Y_0$.
C2  holds because $B_i^0=\emptyset$. C3  holds because $f$ is not defined on $Y_0$. A1 and A2 are not applicable for $\beta=0$
since $0$ is not odd.

\par\bigskip\noindent
\underline {{\it Step} [$\alpha = \lim\{ \beta < \alpha\}$]}: We construct $\alpha$-coloring
as in Lemma \ref{L:GOOD4LIMIT}.  
\par\bigskip\noindent 
\underline {{\it Step} [$\alpha=\beta+1$]}: 
Put
$$
C_i^\alpha=[f^{-1}(A_i^\beta)\cap Y_\alpha]\cup B_i^\beta\ {\rm and}\ D_i^\alpha=[f^{-1}(B_i^\beta)\cap Y_\alpha]\cup A_i^\beta .
$$
Let us show that these sets have the following properties:
\begin{itemize}
	\item[S1.]  $C_i^\alpha$ and $D_i^\alpha$ are functionally closed;
	\item[S2.]  $C_i^\alpha\cap D_i^\alpha=\emptyset$; and
	\item[S3.]  If $x\in \bigcup_{\gamma\leq \alpha}Y_\gamma\cap X$ then both $f(x)$ and $x$ are in $\bigcup\{C_i^\alpha,D_i^\alpha:i\leq n+1\}$. 
\end{itemize}
For S1, observe that $f^{-1}(A_i^\beta)$ is functionally closed as the inverse image of the set functionally closed by C1 for $\beta$.
By property 4 of favorable representation, $f^{-1}(A_i^\beta)\cap Y_\alpha$ is functionally closed in $Y$ as well.
The set $B_i^\beta$ is functionally closed by C1  for $\beta$. Thus, $C_i^\alpha$ is functionally closed.
For S2 we need to verify the following four equalities:
\begin{enumerate}
	\item $[f^{-1}(A_i^\beta)\cap Y_\alpha]\cap [f^{-1}(B_i^\beta)\cap Y_\alpha]=\emptyset$;
	\item $[f^{-1}(A_i^\beta)\cap Y_\alpha]\cap  A_i^\beta=\emptyset$;
	\item $B_i^\beta\cap [f^{-1}(B_i^\beta)\cap Y_\alpha]=\emptyset$; and
	\item $B_i^\beta\cap A_i^\beta=\emptyset$.
\end{enumerate}
Equality 1 holds because $A_i^\beta\cap B_i^\beta = \emptyset$ by C2  for $\beta$.
Equalities  2 and 3 hold by C3  for $\beta$.  
Equality 4 holds by C2  for $\beta$.
Let us show S3. 
By 2 of the lemma hypothesis, $f(x)\in \bigcup_{\gamma\leq \beta} Y_\gamma$
and $x\in [f^{-1}(\bigcup_{\gamma\leq \beta} Y_\gamma)\cap Y_\alpha]\cup [\bigcup_{\gamma\leq \beta} Y_\gamma]$.
By C1  for $\beta$, 
$\bigcup_{\gamma\leq \beta} Y_\gamma = \bigcup\{A_i^\beta,B_i^\beta:i\leq n+1\}$ and 
$[f^{-1}(\bigcup_{\gamma\leq \beta} Y_\gamma)\cap Y_\alpha] \subset \bigcup \{f^{-1}(A_i^\beta)\cap Y_\alpha,f^{-1}(B_i^\beta)\cap Y_\alpha:i\leq n+1\}$.
The right-hand sets in these formulas are subsets of $\bigcup \{C_i^\alpha, D_i^\alpha:i\leq n+1\}$
by the definition.

Since $C_i^\alpha$ and $D_i^\alpha$ are functionally closed and disjoint, by Lemma \ref{L:ENLARGE}, there exists
a family $\{A_i^\alpha,B_i^\alpha : i\leq n+1\}$ of subsets of $\bigcup_{\gamma\leq \alpha}Y_\gamma$ with the following properties:
\begin{itemize}
	\item[S4.] $\{A_i^\alpha,B_i^\alpha: i\leq n+1\}$  is a functionally closed cover of $\bigcup_{\gamma\leq \alpha}Y_\gamma$;
	\item[S5.] $A_i^\alpha\cap Z =C_i^\alpha$ and $B_i^\alpha\cap Z=D_i^\alpha$, where $Z=\bigcup\{C_j^\alpha,D_j^\alpha: j\leq n+1\}$;
	\item[S6.] $A_i^\alpha\cap B_i^\alpha=\emptyset$. 
\end{itemize}
In addition, if $x\in A_i^\alpha\cap X$ then, by S3, $x\in Z = \bigcup\{C_i^\alpha, D_i^\alpha:i\leq n+1\}$.
Thus, $x\in A_i^\alpha\cap Z$. By S5, $x\in C_i^\alpha$. In short,

\begin{itemize}
	\item[S7.] If $x\in A_i^\alpha\cap X$ ($B_i^\alpha\cap X$), then $x\in C_i^\alpha$ ($D_i^\alpha$).
\end{itemize}

Also, if $x\in A_i^\alpha\setminus C_i^\alpha$ then, by S5, $x\not \in Z$.
The set $Z$ contains
$\bigcup_{\gamma\leq \beta}Y_\gamma$ due to second summands in the definitions
of $C_i^\alpha$ and $D_i^\alpha$ and assumption C1  for $\beta$. 
Therefore, $x\not\in \bigcup_{\gamma\leq \beta}Y_\gamma$.
Hence, $x\in Y_\alpha$. In summary,

\begin{itemize}
	\item[S8.] 
If $x\in A_i^\alpha\setminus C_i^\alpha$ then $x\in Y_\alpha$.
\end{itemize}

Let us check C1-C3, A1, and A2. Property C1  is S4 and C2  is S6. Let us demonstrate C3  for $A_i^\alpha$.
By S7, $f(A_i^\alpha)\cap A_i^\alpha=f(C_i^\alpha)\cap A_i^\alpha$. 
By S3, $f(C_i^\alpha)\subset Z=\bigcup \{C_i^\alpha, D_i^\alpha:i\leq n+1\}$. By S5, $A_i^\alpha\cap Z = C_i^\alpha$.
Therefore $f(C_i^\alpha)\cap A_i^\alpha = f(C_i^\alpha)\cap C_i^\alpha$.
To show that the last intersection is empty
we need to verify the following equalities.
\begin{enumerate}
	\item $f(f^{-1}(A_i^\beta)\cap Y_\alpha)\cap [f^{-1}(A_i^\beta)\cap Y_\alpha]=\emptyset$;
	\item $f(f^{-1}(A_i^\beta)\cap Y_\alpha)\cap B_i^\beta=\emptyset$;
	\item $f(B_i^\beta)\cap [f^{-1}(A_i^\beta)\cap Y_\alpha]=\emptyset$; and
	\item $f(B_i^\beta)\cap B_i^\beta=\emptyset$.
\end{enumerate}
Equality 1 holds by C3  for $\beta$. Equality 2 holds by C2  for $\beta$. Let us show 3. 
If $\beta = 0$ then, by 1 of the lemma's hypothesis, $f(B_i^\beta)$ is empty and 3 holds. Now assume $\beta>0$. From
2 of the lemma's hypothesis and emptiness of $Y_\lambda$ for limit $\lambda$, we
conclude that the first set is in $\bigcup_{\gamma<\beta} Y_\gamma$. The second set is
in $Y_\alpha$ by the definition. By 3 of favorable representation, $\bigcup_{\gamma<\beta} Y_\gamma$
misses $Y_\alpha$. Hence the intersection is empty. Equality 4 is C3  for $\beta$.

To verify A1 we  assume $\alpha$ is odd and pick an odd $\gamma<\alpha$. By construction,
$A_i^\alpha$ contains $[f^{-1}(A_i^\beta)\cap Y_\alpha]\cup B_i^\beta$. It suffices to show that $B_i^\beta$ contains $A_i^\gamma$.
If $\beta$ is limit then, by construction, $B_i^\beta = \bigcup\{A_i^\lambda: \lambda<\beta,\ \lambda\ is\ odd\}$ and the right side contains 
$A_i^\gamma$. If $\beta = \lambda +1$ then, by construction,  $B_i^\beta$ contains  $[f^{-1}(B_i^\lambda)\cap Y_\beta]\cup A_i^\lambda$. By A1 for $\lambda$,
 $A_i^\lambda$ contains $A_i^\gamma$.

Finally to show $A2$ we assume $\alpha$ is odd and pick a $\gamma$ such that $\gamma'+2<\alpha$, where $\gamma'$
is the smallest odd ordinal greater than or equal to $\gamma$.  
Observe that $A_i^\alpha=C_i^\alpha\cup (A_i^\alpha\setminus C_i^\alpha)$
which in its turn equals $[f^{-1}(A_i^\beta)\cap Y_\alpha]\cup B_i^\beta\cup  (A_i^\alpha\setminus C_i^\alpha)$.
The first set in this union is in $Y_\alpha$ and so is the third by S8.  By
3 of favorable representation $Y_\alpha$ misses $Y_\gamma$. Thus,
$A_i^\alpha\cap Y_\gamma = B_i^\beta\cap Y_\gamma$.

If $\beta$ is limit, then $B_i^\beta\cap Y_\gamma = \bigcup\{A_i^\lambda\cap Y_\gamma: \lambda<\beta,\ \lambda\ is \ odd\}$.
The right set can be written as the union of $T_1= \bigcup\{A_i^\lambda\cap Y_\gamma: \lambda\leq\gamma'+2,\ \lambda\ is \ odd\}$
and $T_2=\bigcup\{A_i^\lambda\cap Y_\gamma: \gamma'+2\leq\lambda<\beta,\ \lambda\ is \ odd\}$. By A1, $T_1 = A_i^{\gamma'+2}\cap Y_\gamma$.
By A2, $T_2=A_i^{\gamma'+2}\cap Y_\gamma$.

Now assume $\beta$ is isolated. Since there exists odd $\gamma'$ below odd $\alpha$ we conclude that $\beta\not=0$.
Therefore, $\beta = \lambda+1$. We have $B_i^\beta =D_i^\beta\cup (B_i^\beta\setminus D_i^\beta)$,
which in its turn equals the union of three sets $T_1=[f^{-1}(B_i^\lambda)\cap Y_\beta]$, 
$T_2=A_i^\lambda$,  and $T_3= (B_i^\beta\setminus D_i^\beta)$. 
Since $\alpha>\gamma+2$, we conclude $\beta>\gamma+1$.
By property 3 of favorable representations, $Y_\beta$ misses $Y_\gamma$. 
We have $T_1$ is in $Y_\beta$ and so is $T_3$ by S8 for isolated $\beta$.
Therefore, only $T_2$ can meet $Y_\gamma$.
Hence, $B_i^\beta\cap Y_\gamma = A_i^\lambda\cap Y_\gamma$.
By A2 for $\lambda$, the right side of the last equality is $A_i^{\gamma'+2}\cap Y_\gamma$.
Inductive construction is complete.

\par\bigskip
Put $A_i = A_i^\tau$ and $B_i = B_i^\tau$. By C1-C3 , the family ${\mathcal G}=\{A_i, B_i:i\leq n+1\}$ 
is a closed cover of $Y$ such that $f(G)\cap G=\emptyset$ for every $G\in {\mathcal G}$.
By 3 of Lemma \ref{L:COMMONFACTS}, ${\mathcal F}=\{f^{-1}(G):G\in {\mathcal G}\}$ is a desired coloring of $f$.
The lemma is proved.
\end{proof}


\par\bigskip\noindent
\begin{lem}\label{L:COMPACT}
Let $X$ be a compact subspace of ${\mathbb R}^n$ and $f:X\to {\mathbb R}^n$ a continuous fixed-point free map.
Then $f$ is colorable in at most $4n(n+1)$ colors.
\end{lem}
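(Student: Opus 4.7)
The plan is to reduce to $2n$ applications of Lemma~\ref{L:GOOD} by partitioning $X$ according to which coordinate direction is dominant in the displacement $f(x)-x$. Each application produces a coloring of size at most $2n+2$, so concatenating the $2n$ resulting families gives a coloring of $f\colon X\to\mathbb R^n$ of size at most $2n(2n+2)=4n(n+1)$.

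By compactness of $X$ and continuity of $f$, the number $\delta=\min\{\|f(x)-x\|:x\in X\}$ is strictly positive, and writing $\pi_i\colon\mathbb R^n\to\mathbb R$ for the $i$-th coordinate projection, every $x\in X$ satisfies $|\pi_i(f(x)-x)|\geq \delta/\sqrt n$ for at least one $i$. Hence the $2n$ sets
$$
X_{i,\sigma}=\{x\in X:\sigma(\pi_if(x)-\pi_ix)\geq \delta/\sqrt n\},\qquad i\in\{1,\dots,n\},\ \sigma\in\{+,-\},
$$
are closed in $\mathbb R^n$ and cover $X$. It suffices to produce, for each pair $(i,\sigma)$, a coloring of $f|_{X_{i,\sigma}}$ by at most $2n+2$ colors.

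To handle $X_{i,+}$ (the case $\sigma=-$ is symmetric) I will present Lemma~\ref{L:GOOD} with the favorable representation of $Y=\mathbb R^n$ obtained by slicing orthogonally to the $i$-th axis. Set $\epsilon=\delta/(2\sqrt n)$, pick $M$ strictly exceeding $\pi_i$ on the bounded set $X$, declare $Y_\alpha=\emptyset$ for every limit $\alpha\leq \omega$, and put
$$
Y_0=\{x\in\mathbb R^n:\pi_ix\geq M\},\qquad Y_k=\{x\in\mathbb R^n:M-k\epsilon\leq \pi_ix\leq M-(k-1)\epsilon\}\ \ (k\geq 1).
$$
This locally finite closed cover of $\mathbb R^n$ verifies properties (1), (2), and (4) of a favorable representation; property (3) holds because $Y_k\cap Y_l\neq\emptyset$ forces $|k-l|\leq 1$. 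The choice of $M$ realizes hypothesis (1) of Lemma~\ref{L:GOOD}, namely $X_{i,+}\cap Y_0=\emptyset$. For hypothesis (2), any $z\in Y_\alpha\cap X_{i,+}$ (necessarily $\alpha\geq 1$) satisfies $\pi_iz\geq M-\alpha\epsilon$ and $\pi_if(z)\geq \pi_iz+2\epsilon\geq M-(\alpha-2)\epsilon$, which places $f(z)$ in $\bigcup_{\beta<\alpha}Y_\beta$ and strictly above the upper face of $Y_\alpha$, hence outside $Y_\alpha$. Lemma~\ref{L:GOOD} then yields the desired $(2n+2)$-sized coloring of $f|_{X_{i,+}}$.

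No step is expected to be a serious obstacle. The only genuine decision is the choice of slab width $\epsilon<\delta/\sqrt n$, which is dictated precisely by the need to make the displacement along the $i$-th axis strictly exceed the slab thickness, so that the move-down hypothesis of Lemma~\ref{L:GOOD} holds; the remaining verifications of the favorable-representation axioms and the concatenation of the $2n$ colorings are straightforward bookkeeping.
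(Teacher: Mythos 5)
Your proposal is correct and follows essentially the same route as the paper: partition $X$ into $2n$ closed pieces according to the sign of a coordinate of the displacement $f(x)-x$, slice ${\mathbb R}^n$ into slabs transverse to that coordinate axis to obtain a favorable representation satisfying hypotheses (1) and (2) of Lemma~\ref{L:GOOD}, and concatenate the resulting $(2n+2)$-sized colorings. The only (immaterial) differences are that the paper defines the $2n$ pieces as a closed shrinking of the open sets $\{\pi_i\circ f(x)\gtrless\pi_i(x)\}$ and uses a finite, adaptively chosen slab decomposition, whereas you use a uniform displacement bound $\delta/\sqrt n$ and a uniform-width slab family indexed by $\omega$; both are legitimate instances of the same argument.
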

\begin{proof}
For each $i\leq n$, put $A_i= \{x\in X: \pi_i\circ f(x)>\pi_i(x)\}$ and
$B_i= \{x\in X: \pi_i\circ f(x)<\pi_i(x)\}$.
By continuity of $f$, each of these sets is open. Since $f$ is fixed-point free,
$\{A_i,B_i:i\leq n\}$ is a cover of $X$. By paracompactness of $X$, 
there exists  a closed shrinking $\{A'_i, B_i':i\leq n\}$. It suffices to show
now that $f|_S$ is colorable in $2(n+1)$ colors for each $S\in \{A_i',B_i':i\leq n\}$.
We will demonstrate it for $A= A_1'$.

Fix $a\in A$. Since $A$ is compact and $\pi_1\circ f(x) >\pi_1(x)$ for every  $x\in A$ 
we can find $\epsilon_a>0$ such that

\par\bigskip\noindent
(*) If $x\in A$ and $\pi_1(x)\in [\pi_1(a)-\epsilon_a, \pi_1(a)+\epsilon_a]$ then $\pi_1\circ f(x)>\pi_1(a)+ \epsilon_a$.

\par\bigskip\noindent
By compactness and (*) we can select a strictly decreasing sequence of reals $r_1>....>r_m$
such that 
\begin{itemize}
 	\item[P1.] $A\subset [r_m,r_1)\times {\mathbb R}^{n-1}$; and
	\item[P2.] If $x\in [r_{k+1}, r_k]\times {\mathbb R}^{n-1}\cap A$ then $\pi_1\circ f(x)> r_k$.
\end{itemize}

Put $X_0 = [r_1, \infty)\times {\mathbb R}^{n-1}$; $X_k = [r_{k+1}, r_k]\times R^{n-1}$ if $1\leq k\leq m-1$; and
$X_m = (-\infty,r_m]\times {\mathbb R}^{n-1}$. Clearly $\{X_k\}_{k\leq m}$ is a favorable representation of
${\mathbb R}^n$. We only need to show that ${\mathbb R}^n$, $\{X_k\}_{k\leq m}$, $f|_A$, and $A$ meet 1 and 2 of Lemma \ref{L:GOOD}.
Requirement 1 of Lemma \ref{L:GOOD} is met because $X_0$ misses $A$ by P1. Requirement 2 of Lemma \ref{L:GOOD} is met in P2.
The lemma is proved. 
\end{proof}

\par\bigskip\noindent
\begin{thm}\label{T:EUCLIDEAN}
Let $X$ be a closed subspace of ${\mathbb R}^n$ and $f:X\to {\mathbb R}^n$
a continuous fixed-point free map.
Then $f$ is colorable.
\end{thm}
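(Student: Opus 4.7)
The plan is to extend the approach of Lemma \ref{L:COMPACT}, trading the compactness of $X$ for the $\sigma$-compactness and local compactness of $\mathbb{R}^{n}$. As in that lemma, I first cover $X$ by the $2n$ open sign sets
\[
A_i = \{x\in X:\pi_i f(x) > \pi_i(x)\},\qquad B_i = \{x\in X:\pi_i f(x) < \pi_i(x)\},\qquad i=1,\dots,n,
\]
which cover $X$ because $f$ is fixed-point free. Using paracompactness of $X$, I shrink to a closed cover $\{A_i',B_i'\}$. It suffices to color each $f|_S$ for $S$ in this cover, so I fix $A=A_1'$ on which $\pi_1\circ f>\pi_1$.

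The next step is to build a favorable representation $\{Y_\alpha\}_{\alpha\le\tau}$ of $\mathbb{R}^n$ by $\pi_1$-slabs $Y_\alpha = [r_{\alpha+1},r_\alpha]\times\mathbb{R}^{n-1}$, ordered so that the index increases as $\pi_1$ decreases, chosen so that $A$ misses the ``top'' slab $Y_0$ while $\pi_1 f(z)>r_\alpha$ whenever $z\in A\cap Y_\alpha$. Once this is done, hypotheses (1)--(2) of Lemma \ref{L:GOOD} apply and yield a $(2n+2)$-coloring of $f|_A$. In the compact case the $r_\alpha$ are chosen in finite number using property $(\ast)$ from the proof of Lemma \ref{L:COMPACT}; for the non-compact case I would exhaust $A$ by the compacta $K_m = A\cap \overline B(0,m)$, on each of which the continuous positive function $\pi_1 f - \pi_1$ admits a uniform lower bound $\epsilon_m>0$, and then pick the slab endpoints inductively with spacing at most $\epsilon_m/2$ in the $\pi_1$-range occupied by $K_m$. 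If $\pi_1(A)$ is unbounded, I would first split $A$ along $\pi_1=0$ into the closed pieces $A\cap\{\pm \pi_1\ge 0\}$ so that on each piece $\pi_1$ is bounded on one side, which permits an ordinal-indexed slab representation with a well-defined top missed by the piece.

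The main obstacle is that a full $\pi_1$-slab $A\cap([r_{\alpha+1},r_\alpha]\times\mathbb{R}^{n-1})$ remains unbounded in the $n-1$ perpendicular directions, so the jump $\pi_1 f - \pi_1$ is in general \emph{not} uniformly bounded below on it---even though it is bounded below on each compactum $K_m$. Consequently, a single decreasing sequence of slab endpoints coming from the $\epsilon_m$ may fail to satisfy the required jump inequality globally. Overcoming this is the crux of the argument: one must either subdivide $A$ further (for instance, using local compactness in the perpendicular directions to reduce each slab to compact boxes, producing finite colorings that must then be reassembled compatibly), or strengthen the favorable representation so that the slab widths vary with position while still satisfying the axioms of favorable representation. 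Once a suitable representation is in hand, Lemma \ref{L:GOOD} takes over and the remainder of the argument is a direct imitation of Lemma \ref{L:COMPACT}; the combined colorings over the $2n$ sign-set pieces (and, if needed, the two $\pi_1$-halves) produce the desired finite coloring of $f$.
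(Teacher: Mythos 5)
Your proposal correctly reduces to Lemma \ref{L:COMPACT}-style slab arguments and correctly identifies the obstruction, but it does not overcome it, and the obstruction is fatal to the route you chose. On a non-compact piece $A=A_1'$ the infimum of $\pi_1\circ f-\pi_1$ over a full slab $[r_{\alpha+1},r_\alpha]\times\mathbb{R}^{n-1}$ can be $0$ (the jump can shrink as you move off to infinity in the perpendicular directions), so no choice of slab endpoints --- however adapted to the exhaustion $K_m=A\cap\overline{B}(0,m)$ --- can guarantee hypothesis (2) of Lemma \ref{L:GOOD}, namely that every point of $A\cap Y_\alpha$ is mapped strictly out of $Y_\alpha$ into earlier slabs. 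Splitting along $\pi_1=0$ and refining the $\epsilon_m$ does not help, because the failure occurs within a single bounded $\pi_1$-range. You explicitly flag this as ``the crux'' and offer two possible remedies (further subdivision into compact boxes, or position-dependent slab widths) without carrying either out; as written the argument therefore has a genuine gap precisely at the step that distinguishes the non-compact case from Lemma \ref{L:COMPACT}.

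The paper resolves this by abandoning coordinate slabs in favor of the compact cubical annuli $X_1=[-1,1]^n$, $X_{k+1}=[-(k+1),k+1]^n\setminus(-k,k)^n$, and by splitting $X$ not by the sign of a coordinate displacement but by the \emph{radial} direction of displacement: $A$ is the closed set of points that land in an annulus of index at least their own, and $B$ is the rest. On $B$ every point maps strictly inward, so $\{X_k\}$ is already a favorable representation satisfying hypotheses (1)--(2) of Lemma \ref{L:GOOD} with no uniform-jump estimate needed. On $A$ one chooses integers $b_k$ growing fast enough that $f([-b_k,b_k]^n\cap X)\subset(-b_{k+1},b_{k+1})^n$; the pieces $A\cap([-b_k,b_k]^n\setminus(-b_{k-1},b_{k-1})^n)$ are compact, so Lemma \ref{L:COMPACT} colors each, and since the image of the $k$-th piece is trapped between the $(k-1)$-st and $(k+1)$-st shells, the odd-indexed and even-indexed colorings can be united into closed colors. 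This even/odd gluing of compact colorings, made possible by the fact that $f$ carries bounded sets to bounded sets, is exactly the idea your proposal is missing.
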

\begin{proof}
Put $X_1 =[-1,1]^n$ and $X_{k+1}=[-(k+1), (k+1)]^n\setminus (-k,k)^n$. Let $M= 4n(n+1)$.

\par\smallskip\noindent
{\it Claim 1.} {\it
Let $A = \{x\in X: x\in X_k\ and\ f(x)\in X_m,\ where\ m\geq k\}$. Then there exists
a finite open cover ${\mathcal U}_A$ of $A$ such that the closure of every element
of ${\mathcal U}_A$ is a color.
}
\par\smallskip\noindent
{\it Proof of Claim 1.} Clearly, $A$ is closed. Fix a strictly increasing sequence $\langle b_k\rangle_k$ of positive integers
that have the following property.
\par\smallskip\noindent
\begin{itemize}
	\item[P1.] $f([-b_k,b_k]^n\cap X)\subset (-b_{k+1},b_{k+1})^n$.
\end{itemize}
\par\smallskip\noindent
For every $k$, select ${\mathcal S}_k = \{S_1^k,...,S_M^k\}$ a coloring of $f_k$, where
$f_k$ is the restriction of $f$ to
$A\cap ([-b_k,b_k]^n\setminus (-b_{k-1}, b_{k-1})^n)$.
This is possible by Lemma \ref{L:COMPACT}. 
By the definition of $A$, we have  $f(S)$ does not meet $(-b_{k-1},b_{k-1})^n$ for any $S\in {\mathcal S}_k$. 
Also $f(S)$ does not meet the compliment of $(b_{k+1}, b_{k+1})^n$ by P1.
In short, we have
\par\smallskip\noindent
\begin{itemize}
	\item[P2.] $f(S)\subset (-b_{k+1},b_{k+1})^n\setminus (-b_{k-1},b_{k-1})^n$ for every $S\in {\mathcal S}_k$.
\end{itemize}
\par\smallskip\noindent
Put $O_i = \bigcup \{S_i^k:k\ is\ odd\}$ and $E_i = \bigcup \{S_i^k: k\ is\ even\}$.
The sets $O_i$ and $E_i$ are closed as unions of discrete families of closed sets
and $\{O_i, E_i :i\leq M\}$ is a cover of $A$.
Let us show that $f(O_1)\cap O_1=\emptyset$. By P2, we have $f(S_1^k)\cap S_1^m=\emptyset$ if
$k\not = m$. Since $S_1^k$ is a color, $f(S_1^k)\cap S_1^k =\emptyset$.
Thus we have colored $f|_A$ in  $2M= 8n(n+1)$ many colors. 
By 1 of Lemma \ref{L:COMMONFACTS}, to find a desired open cover it suffices to show that $f(O_i)$ and $f(E_i)$ are
closed. We have $f(O_i)=\bigcup\{f(S_i^k): i\ is\ odd\}$. Each $f(S_i^k)$ is compact since $S_i^k$ is.
By P2, $\{f(S_i^k): i\ is\ odd\}$ is a locally finite family. Therefore, $f(O_i)$ is closed.
The claim is proved.

\par\bigskip\noindent
{\it Claim 2.} {\it
Let $B =X\setminus \bigcup {\mathcal U}_A$. Then there exists a finite cover ${\mathcal G}_B$ of $B$ by colors.
}
\par\smallskip\noindent
{\it Proof of Claim 2.}
Since $B$ misses $A$ the following holds.
\par\smallskip\noindent
(*) If $x\in B\cap X_k$ and $f(x)\in X_m$ then $m<k$.
\par\smallskip\noindent
Since $\{X_k\}_k$ is a favorable representation of ${\mathbb R}^n$ we only need to show that 1 and 2 of Lemma \ref{L:GOOD} are met.
Since $X_1$ is a subset of $A$ it misses $B$, so 1 of Lemma \ref{L:GOOD} is met. 
Requirement 2 of Lemma \ref{L:GOOD} is given by (*). 
The claim is proved.
\par\medskip
Clearly ${\mathcal G}_B\cup \{\bar U: U\in {\mathcal U}_A\}$ is a finite coloring of $f$.
\end{proof}


\subsection{Chromatic number}\label{SS:chromatic}
Given a fixed-point free map $f:X\to Y$, the {\it chromatic number} of $f$ is the smallest size of a coloring of $f$.
First of all we record the following statement for future references. It extends the main result of \cite{R}.

\par\bigskip\noindent
\begin{cor}\label{R2N2R2N}
Every continuous fixed-point free self-map of ${\mathbb R}^n$ is colorable in at most $n+3$ colors.
\end{cor}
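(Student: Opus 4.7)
The plan is to deduce Corollary~\ref{R2N2R2N} as the self-map case of the paper's main general statement, Theorem~\ref{THEOREM:GENERAL}, which asserts that every continuous fixed-point free map $f\colon X\to Y$, defined on a closed subspace $X$ of an at most $n$-dimensional locally compact paracompact space $Y$, is colorable in at most $n+3$ colors. For a self-map $f\colon\mathbb{R}^n\to\mathbb{R}^n$, we apply Theorem~\ref{THEOREM:GENERAL} with $X=Y=\mathbb{R}^n$: this space is $n$-dimensional, locally compact, and paracompact, and $\mathbb{R}^n$ is trivially closed in itself. The conclusion gives the bound $n+3$.

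If we wish instead to derive the corollary directly from the material of the current section (without invoking the more general theorem), the plan is to sharpen the proof of Theorem~\ref{T:EUCLIDEAN}. First, apply Theorem~\ref{T:EUCLIDEAN} to obtain some finite closed cover $\mathcal{F}_0$ of $\mathbb{R}^n$ by colors, and use Lemma~\ref{L:COMMONFACTS}(1) to thicken each $F\in\mathcal{F}_0$ into an open set whose closure is still a color. Next, exploiting $\dim\mathbb{R}^n=n$, compress the resulting open cover via the absolute-extensor argument of Lemma~\ref{L:ENLARGE}: the diagonal product $\triangle f_i\colon\mathbb{R}^n\to\partial I^{n+1}\simeq S^n$ yields $n+1$ disjoint functionally closed pairs $(\tilde A_i,\tilde B_i)$ covering $\mathbb{R}^n$. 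For a self-map, one can arrange additionally that $f(\tilde A_i)\cap\tilde B_i=\emptyset$, so each $\tilde A_i\cup\tilde B_i$ is itself a single color, yielding $n+1$ colors; two further colors are used to handle the interaction between merged pairs across the annular levels of the favorable representation of $\mathbb{R}^n$, giving the total bound of $n+3$.

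The main obstacle in the direct approach is the pair-merging step, which requires strengthening Lemma~\ref{L:ENLARGE} so that the resulting pairs are separated not only set-theoretically but also under the action of $f$. The natural way to handle this is to replace each $f_i\colon\mathbb{R}^n\to[0,1]$ in the diagonal product by a refinement whose preimages of $0$ and $1$ are $A_i\cup f^{-1}(B_i)$ and $B_i\cup f^{-1}(A_i)$, respectively; continuity of $f$ keeps these sets functionally closed and disjoint (by the $f$-color property of the original pairs). The absolute-extensor argument then goes through as in Lemma~\ref{L:ENLARGE}, and combining the merged pairs with a two-coloring of the annular filtration (as in Claim~1 of Theorem~\ref{T:EUCLIDEAN}) produces the required coloring in $n+3$ colors.
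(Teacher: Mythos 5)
Your first route is circular. Theorem \ref{THEOREM:GENERAL} is proved in Section \ref{S:paracompact} via Theorem \ref{THEOREM:EQUIVALENCE} and Proposition \ref{P:lc}; Proposition \ref{P:lc} rests on Proposition \ref{P:chromatic}, and the proof of Proposition \ref{P:chromatic} begins by invoking Corollary \ref{R2N2R2N} itself (together with Lemma \ref{L:emb+1}) to produce a colorable self-map of ${\mathbb R}^{2n+2}$. So you cannot derive this corollary from Theorem \ref{THEOREM:GENERAL}; the logical order in the paper runs the other way.

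Your second, ``direct'' route starts correctly --- Theorem \ref{T:EUCLIDEAN} does supply some finite coloring --- but the passage from an arbitrary finite coloring to one with $n+3$ colors is precisely the content of Pol's theorem (\cite[Theorem 3.12.17]{VM2}: a finitely colorable fixed-point free self-map of a separable metrizable space of dimension at most $n$ can be colored with $n+3$ colors), and that citation is the entire remaining content of the paper's proof. Your sketch of this reduction has genuine gaps. Lemma \ref{L:ENLARGE} takes as input $n+1$ \emph{disjoint pairs} of functionally closed sets; it does not tell you how to organize an arbitrary finite closed coloring into $n+1$ such pairs, and that combinatorial step --- where $\dim {\mathbb R}^n \leq n$ is actually used, via covers of order at most $n+1$ --- is the heart of Pol's argument and is absent from your outline. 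The further claims, that for a self-map one can arrange $f(\tilde A_i)\cap \tilde B_i=\emptyset$ so that each $\tilde A_i\cup\tilde B_i$ becomes a single color, and that ``two further colors handle the interaction across the annular levels,'' are asserted rather than proved: for $\tilde A_i\cup\tilde B_i$ to be a color you also need $f(\tilde B_i)\cap\tilde A_i=\emptyset$ and $f(\tilde A_i)\cap\tilde A_i=\emptyset$ \emph{after} enlargement, and your modification of the maps $f_i$ controls the enlarged sets only on the set $Z$ of Lemma \ref{L:ENLARGE}, not on its complement, where $f(\tilde A_i)$ may well meet $\tilde A_i$. The intended proof is simply Theorem \ref{T:EUCLIDEAN} combined with the cited theorem of Pol.
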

\begin{proof}
In \cite[Theorem 3.12.17]{VM2}
the following theorem (due to R. Pol) is proved:
{\it Let $X$ be a separable metrizable space and let
$f:X\to X$ be a fixed-point free continuous map.
If $f$ is finitely colorable and $\dim X\leq n$ then
$f$ can be colored with $n+3$ colors.} 
This theorem together with Theorem \ref{T:EUCLIDEAN} give the desired conclusion. 
\end{proof}

\par\bigskip
We would like to remark that the theorem cited in the proof of Corollary \ref{R2N2R2N} holds
if one replace "separable metrizable" by "normal". This fact is observed in \cite{VM} and is attributed 
to Pol. Fur further reference, Theorem 3.12.17 of \cite{VM2} will be referred to as the Pol theorem.
\par\bigskip
Next we estimate the chromatic number of continuous fixed-point free maps $f \colon X \to {\mathbb R}^{n}$, 
thus extending Corollary \ref{R2N2R2N} and making Theorem \ref{T:EUCLIDEAN} more precise. 
For this we need the following observation.

\begin{lem}\label{L:emb+1}
Let $n \geq 0$ and $f \colon X \to Y$ be a continuous fixed-point free map defined on a closed subspace 
$X$ of a metrizable $n$-dimensional $AE(n)$-space $Y$. Then there exist an embedding 
$X \subset Y \times [0,\infty)$ and a continuous fixed-point free map 
$g \colon Y \times [0,\infty) \to Y \times [0,\infty)$ such that $g|X = f$.
\end{lem}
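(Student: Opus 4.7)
The plan is to extend $f$ to a self-map of $Y$ using the $AE(n)$ property, embed $X$ as $X\times\{0\}$, and then introduce a "twist" in the second coordinate that kills all the fixed points that the extension may have introduced outside $X$.

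First I would apply the $AE(n)$-extension property: since $Y$ is an $AE(n)$-space, $\dim Y\leq n$, and $X$ is a closed subspace of the normal (in fact metrizable) space $Y$, the map $f:X\to Y$ extends to a continuous map $F:Y\to Y$. Note that $F$ may well have fixed points off $X$, which is exactly the difficulty we need to address. Next, using metrizability of $Y$ and closedness of $X$, I would pick a continuous function $h:Y\to[0,\infty)$ with $h^{-1}(0)=X$ (for instance $h(y)=\rho(y,X)$ for a compatible metric $\rho$ on $Y$).

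Then I would embed $X$ into $Y\times[0,\infty)$ by $x\mapsto(x,0)$ and define
\[
g\colon Y\times[0,\infty)\to Y\times[0,\infty),\qquad g(y,t)=\bigl(F(y),\,t+h(y)\bigr).
\]
Clearly $g$ is continuous and takes values in $Y\times[0,\infty)$ since $t,h(y)\geq 0$. For fixed-point freeness, suppose $g(y,t)=(y,t)$; then $t+h(y)=t$ forces $h(y)=0$, hence $y\in X$, and the first-coordinate equation $F(y)=y$ becomes $f(y)=y$, contradicting the fixed-point freeness of $f$. Finally, for any $x\in X$ the embedded point $(x,0)$ satisfies $g(x,0)=(F(x),0)=(f(x),0)$, so under the identifications $X\leftrightarrow X\times\{0\}$ and $Y\leftrightarrow Y\times\{0\}$ we indeed have $g|X=f$.

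The key idea is just the twist $t\mapsto t+h(y)$: it is invisible on $X$ because $h$ vanishes there, but it shifts the second coordinate strictly upward at every point of $Y\setminus X$, precluding any new fixed point. I do not expect a genuine obstacle: the $AE(n)$-extension is the defining property of $Y$, and the existence of $h$ is immediate from perfect normality of the metrizable space $Y$; the rest is a one-line verification.
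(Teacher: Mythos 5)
Your proposal is correct and follows essentially the same route as the paper: extend $f$ to $F\colon Y\to Y$ via the $AE(n)$ property, and define $g(y,t)=(F(y),\,t+\operatorname{dist}(y,X))$, with fixed-point freeness following from exactly the same case analysis. No issues.
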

\begin{proof}
Let $h \colon Y \to Y$ be a continuous  extension of $f$. Such $h$ exists since 
$\dim Y =n$ and $Y \in AE(n)$. Let $\operatorname{Fix}(h) = \{ y \in Y \colon h(y) = y\}$. 
Since $f$ is fixed-point free and $h|X = f$ it follows that $X \cap \operatorname{Fix}(h) = \emptyset$. 
Identify $Y$ with the subspace $Y \times \{ 0\}$ of the product $Y \times [0,\infty)$, $X$ with the 
subspace $X \times \{ 0\}$ and define $g$ as follows:
\[ g(y,r) = \left(h(y), r + \operatorname{dist}(y, X)\right). \]
 
For $(x,0) \in X \times \{0\}$ we have $g(x,0) = \left(h(x), \operatorname{dist}(x, X)\right) = (f(x),0)$ 
which shows that $g$ is an extension of $f$. To see that $g$ is fixed-point free note that 
\begin{enumerate}
\item 
if $y \in \operatorname{Fix}(h)$, then $y\not\in X$, and thus $\operatorname{dist}(y,X) >0$ and $g(y,r) \not= (y,r)$;
\item 
if $y \not\in \operatorname{Fix}(h)$, then $h(y) \not= y$ and thus $g(y,r) \not= (y,r)$. 
\end{enumerate}
\end{proof}

\begin{pro}\label{P:chromatic}
Every continuous fixed-point free map $f \colon X \to Y$, defined on a closed subset $X$ of an at most $n$-dimensional locally 
compact separable metrizable space $Y$ (e.g. $Y = {\mathbb R}^{n}$), is colorable in at most $n+3$ colors.
In addition, there exists an $(n+3)$-sized coloring $\mathcal F$ of $f$ such that
$F\cap cl_Y(f(F))=\emptyset$ for every $F\in {\mathcal F}$.
\end{pro}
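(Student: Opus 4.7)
\noindent
\textit{Plan.} My approach combines the finite colorability from Theorem~\ref{T:EUCLIDEAN} with the Pol theorem cited just before Corollary~\ref{R2N2R2N} and the self-map conversion of Lemma~\ref{L:emb+1}.

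Since $Y$ is locally compact, separable, metrizable, and of dimension at most $n$, the Menger--N\"obeling embedding theorem lets me embed $Y$ as a closed subspace of $\mathbb{R}^{2n+1}$; the closedness of $X$ in $Y$ then makes $X$ closed in $\mathbb{R}^{2n+1}$. Applying Theorem~\ref{T:EUCLIDEAN} to $f \colon X \to \mathbb{R}^{2n+1}$ yields that $f$ is finitely colorable, which by restriction gives finite colorability of the original map $f \colon X \to Y$.

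To sharpen this to $n+3$ colors I convert $f$ into a fixed-point free self-map via Lemma~\ref{L:emb+1}. Since the given $Y$ need not be $AE(n)$, I first close-embed $Y$ into a metrizable $n$-dimensional $AE(n)$-space $\widehat Y$ (a suitable universal target, e.g.\ the N\"obeling space $N_n^{2n+1}$). Applying Lemma~\ref{L:emb+1} with $\widehat Y$ in place of $Y$ produces a fixed-point free continuous self-map $g \colon \widehat Y \times [0,\infty) \to \widehat Y \times [0,\infty)$ satisfying $g|X = f$. The Pol theorem applied to $g$, together with $\dim X \leq n$, delivers the claimed $(n+3)$-sized coloring of $f$ upon restriction to $X$. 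For the additional clause, let $\mathcal G$ denote the associated closed cover (with $g(G) \cap G = \emptyset$ for each $G \in \mathcal G$). By Lemma~\ref{L:COMMONFACTS}(3), the family $\mathcal F = \{f^{-1}(G) \cap X : G \in \mathcal G\}$ is an $(n+3)$-sized coloring of $f$ satisfying $F \cap \mathrm{cl}_Y(f(F)) = \emptyset$ for every $F \in \mathcal F$, which is exactly the strengthened conclusion.

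The main obstacle is calibrating the Pol bound precisely to $n+3$ rather than $n+4$: since $g$ is a self-map of the $(n{+}1)$-dimensional space $\widehat Y \times [0,\infty)$, a direct application of Pol's theorem to $g$ yields only $n+4$ colors. To absorb the extra color one must exploit that the domain of genuine interest, $X$, has dimension at most $n$ and sits inside the slice $\widehat Y \times \{0\}$. The cleanest route is a mild strengthening of Pol's theorem valid for maps $X \to Y$ with $\dim X \leq n$ (not only self-maps), whose proof follows the same partition/refinement outline since colors are subsets of $X$ and the combinatorics only sees $\dim X$; alternatively one argues by a color-merging step that identifies or reduces the extraneous color using $\dim X \leq n$ directly.
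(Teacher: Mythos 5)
Your first paragraph (finite colorability via a closed embedding of $Y$ into ${\mathbb R}^{2n+1}$ and Theorem \ref{T:EUCLIDEAN}) is fine and agrees with the paper. The problem is that the sharpening to $n+3$ colors --- which is the actual content of the proposition --- is left with two genuine gaps. First, to invoke the Pol theorem for the self-map $g$ of $\widehat Y\times[0,\infty)$ you must know that $g$ is \emph{finitely colorable}, and you never establish this. With $\widehat Y=N_n^{2n+1}$ the space $\widehat Y\times[0,\infty)$ is nowhere locally compact, so neither Theorem \ref{T:EUCLIDEAN} nor Theorem \ref{THEOREM:PARACOMPACT} applies to $g$; worse, Proposition \ref{P:nobeling} of this very paper shows that such spaces carry fixed-point free non-colorable self-maps, so colorability of $g$ genuinely requires an argument. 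Second, even granting that, you correctly observe that $\dim(\widehat Y\times[0,\infty))=n+1$ only yields $n+4$ colors, and your two proposed remedies --- a ``mild strengthening of Pol's theorem'' to maps $X\to Y$ with $\dim X\le n$, or an unspecified ``color-merging step'' --- are asserted rather than proved. As stated (and as used throughout the paper) Pol's theorem is for self-maps, and the bound is governed by the dimension of the space the self-map lives on; you cannot simply substitute $\dim X$ for it without reworking the proof.

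The paper closes both gaps with one device that your proposal is missing: an $n$-soft map $p\colon N_n^{2n+1}\to{\mathbb R}^{2n+2}$. It keeps the extension $g$ on Euclidean space (so that Lemma \ref{L:emb+1} together with Theorem \ref{T:EUCLIDEAN} and the Pol theorem make $g$ colorable there, in at most $2n+5$ colors), uses $n$-softness to find a section $j\colon Y\to N_n^{2n+1}$ of $p$ and a lift $h\colon N_n^{2n+1}\to N_n^{2n+1}$ with $p\circ h=g\circ p$ extending the copy $f'=j\circ f\circ p|j(X)$ of $f$. Colorability of $h$ is then \emph{inherited} from $g$ by pulling colors back through $p$, and since $\dim N_n^{2n+1}=n$ the Pol theorem applied to the self-map $h$ gives exactly $n+3$ colors; Lemma \ref{L:COMMONFACTS}(3) then supplies the closure condition, as you indicate. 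If you want to salvage your outline, you should replace the appeal to Pol on $\widehat Y\times[0,\infty)$ by this lifting argument (or supply a complete proof of the strengthened Pol theorem you invoke).
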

\begin{proof}
Without loss of generality we may assume that $Y$ is a closed subspace of ${\mathbb R}^{2n+1}$. 
By Lemma \ref{L:emb+1} and Corollary \ref{R2N2R2N}, there is a colorable (in at most $2n+5$ colors) 
map $g \colon {\mathbb R}^{2n+2} \to {\mathbb R}^{2n+2}$ such $g|X = f$. Let $p \colon N_{n}^{2n+1} \to {\mathbb R}^{2n+2}$ 
be an $n$-soft map of the $n$-dimensional universal N\"{o}belling space (see \cite[Theorem 5.6.4]{chibook}). 
Since $\dim Y \leq n$ and $p$ is $n$-soft, there exists a map $j\colon Y \to N_{n}^{2n+1}$ such that 
$p\circ j = \operatorname{id}_{Y}$. Clearly $j(X)$ and $j(Y)$ are copies of $X$ and $Y$ in $N_{n}^{2n+1}$ 
and the map $f^{\prime} \colon j(X) \to j(Y)$, defined as $f^{\prime} = j\circ f\circ p|j(X)$, is a copy of $f$. 
Consequently, it suffices to prove the required upper bound for the chromatic number of $f^{\prime}$. 
Using $n$-softness of $p$ we conclude that there 
exists a map $h \colon N_{n}^{2n+1} \to N_{n}^{2n+1}$ such that $p\circ h = g\circ p$ and 
$h|j(X) = j\circ g\circ p|j(X) = j\circ f\circ p|j(X) =f^{\prime}$. Colorability of $g$ and the equality 
$p\circ h = g\circ p$ imply that $h$ is also colorable. 
Since the dimension of $N_n^{2n+1}$ is $n$, by the Pol theorem \cite[Theorem 3.12.17]{VM2}, $h$ is colorable in at most $n+3$ colors. 
Since $h$ is a self-map, by 3 of Lemma \ref{L:COMMONFACTS}, we can find $\mathcal F$ with the required properties.
\end{proof}



\section{Fixed-point free maps of locally compact paracompact spaces}\label{S:paracompact}

\begin{pro}\label{P:lc}
Every continuous fixed-point free map $f \colon X \to Y$, defined on a closed subspace of an at most $n$-dimensional locally 
compact and Lindel\"{o}f space $Y$, is colorable in at most $n+3$ colors. 
In addition, there exists an $(n+3)$-sized coloring $\mathcal F$ of $f$ such that
$F\cap cl_Y(f(F))=\emptyset$ for every $F\in {\mathcal F}$.
\end{pro}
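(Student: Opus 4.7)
My plan is to follow the blueprint of the proof of Theorem~\ref{T:EUCLIDEAN}, with the nested cubes $[-k,k]^{n}$ replaced by a compact exhaustion of $Y$, and then to invoke the normal version of the Pol theorem (the one mentioned in the remark after Corollary~\ref{R2N2R2N}) to sharpen the number of colors to $n+3$. Since $Y$ is locally compact and Lindel\"{o}f, $Y$ is $\sigma$-compact, and covering $Y$ by countably many open sets whose closures are compact zero-sets (which exist by local compactness and complete regularity) and taking finite unions produces an increasing sequence $K_{1}\subset K_{2}\subset\cdots$ of compact zero-sets in $Y$ with $K_{k}\subset\operatorname{int}_{Y}K_{k+1}$ and $Y=\bigcup_{k}K_{k}$. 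Setting $K_{0}=\emptyset$, $Y_{0}=K_{1}$, and $Y_{k}=K_{k+1}\setminus\operatorname{int}K_{k}$ for $k\geq 1$ produces compact shells overlapping only in $\partial K_{k}$ between consecutive indices. The family $\{Y_{k}\}$ is a favorable representation of $Y$ in the sense of Lemma~\ref{L:GOOD}: the functional-closedness condition~(2) of the definition is secured by the zero-set choice of the $K_{k}$'s together with the local finiteness of the shells.

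With the representation fixed I would split $X$ as $X=A\cup B$ with
\[
A=\{x\in X:x\in Y_{k},\ f(x)\in Y_{m}\text{ for some }m\geq k\},\qquad B=X\setminus A,
\]
and color each piece separately, exactly as in the two claims in the proof of Theorem~\ref{T:EUCLIDEAN}. For $A$, each $A\cap Y_{k}$ is compact, so a direct compactness argument finitely colors $f|_{A\cap Y_{k}}$; the parity trick of Claim~1 in the proof of Theorem~\ref{T:EUCLIDEAN} then stitches these shell-colorings into a finite coloring of $f|_{A}$. For $B$, the defining condition of $A$ forces every $x\in B\cap Y_{k}$ with $k\geq 1$ to satisfy $f(x)\in[\bigcup_{j<k}Y_{j}]\setminus Y_{k}$, which is precisely hypothesis~(2) of Lemma~\ref{L:GOOD}; since also $Y_{0}\cap B=\emptyset$, that lemma applies and yields a finite coloring of $f|_{B}$. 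Combining gives a finite coloring of $f$.

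Now $X$ is normal and $\dim X\leq n$, so the normal version of the Pol theorem refines this finite coloring of $f$ into one in at most $n+3$ colors. The additional requirement $F\cap\operatorname{cl}_{Y}f(F)=\emptyset$ would be handled as in the conclusion of the proof of Proposition~\ref{P:chromatic}: extend $f$ to a continuous fixed-point free self-map on a suitable auxiliary ambient space (replacing the metric distance of Lemma~\ref{L:emb+1} by a Urysohn function separating $X$ from the fixed-point set of some continuous self-map extension of $f$), color that self-map in $n+3$ colors via the Pol theorem, and apply part~3 of Lemma~\ref{L:COMMONFACTS}. The main obstacle I anticipate is in this last step, namely producing a continuous self-map extension of $f$ without metrizability of $Y$; in Proposition~\ref{P:chromatic} this was achieved via the N\"{o}beling universal space and $n$-softness, and here it will require an analogous inverse-spectrum device valid for arbitrary locally compact Lindel\"{o}f spaces of dimension at most $n$.
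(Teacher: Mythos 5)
There is a genuine gap, and it sits exactly where you park it in your last sentence. The paper's proof of Proposition \ref{P:lc} does not rerun the shell decomposition of Theorem \ref{T:EUCLIDEAN} inside $Y$ at all: it passes to the one-point compactification, writes $Y$ as the limit of a factorizing $\omega$-spectrum $\{Y_{\alpha},q_{\alpha}^{\beta},A\}$ of \emph{locally compact separable metrizable} spaces of dimension $\leq n$ with proper projections, uses the Spectral Theorem to factor $f$ through some $f_{\beta}\colon X_{\beta}\to Y_{\beta}$, verifies that $f_{\beta}$ is fixed-point free (by factorizing a countable family of functionally open sets witnessing fixed-point freeness of $f$), applies Proposition \ref{P:chromatic} at the metrizable level $\beta$, and pulls the $(n+3)$-sized coloring back along $p_{\beta}^{-1}$. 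The ``analogous inverse-spectrum device'' you say ``will be required'' is therefore not a loose end to be tidied later; it is the entire proof, and your proposal contains no substitute for it. Related to this, your appeal to the (normal) Pol theorem is applied to $f\colon X\to Y$ directly, but that theorem is stated for self-maps; the paper converts $f$ into a self-map only via Lemma \ref{L:emb+1} together with the N\"{o}beling space and $n$-softness, both of which require metrizability, which a locally compact Lindel\"{o}f $Y$ need not have (e.g.\ the one-point compactification of an uncountable discrete set).

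There is also a concrete failure earlier, in your treatment of the set $A$. The parity trick of Claim 1 in Theorem \ref{T:EUCLIDEAN} forms $O_{i}=\bigcup\{S_{i}^{k}\colon k\ \text{odd}\}$ for $i\leq M$, i.e.\ it matches the $i$-th color of shell $k$ with the $i$-th color of shell $m$; this only produces \emph{finitely many} sets because every shell is colored with the \emph{same} number $M$ of colors. In the Euclidean case that uniform bound $M=4n(n+1)$ is supplied by Lemma \ref{L:COMPACT}, whose proof leans on the coordinate projections $\pi_{i}$ of ${\mathbb R}^{n}$ to split $A$ into pieces on which $f$ strictly increases or decreases a coordinate. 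Your ``direct compactness argument'' (covering a compact shell by closed neighborhoods that miss their images) does color each $f|_{A\cap Y_{k}}$ finitely, but with a number of colors depending on $k$ and possibly unbounded in $k$, so the union over odd $k$ yields an infinite family rather than a coloring. Obtaining a bound independent of the shell for maps into a general, possibly non-metrizable $Y$ is again essentially the content of Proposition \ref{P:chromatic} plus the spectral reduction, so this gap collapses into the first one.
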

\begin{proof}
By Proposition \ref{P:chromatic}, we may assume that 
$\omega(Y) > \omega$. First we need the following observation.
\par\medskip\noindent
{\it Claim}. {\it $Y = \lim{\mathcal S}_{Y}$, where ${\mathcal S}_{Y} = \{ Y_{\alpha}, q_{\alpha}^{\beta},A\}$ 
is a factorizing $\omega$-spectrum consisting of locally compact separable metrizable spaces $Y_{\alpha}$ and 
proper projections $p_{\alpha}^{\beta} \colon Y_{\beta} \to Y_{\alpha}$, $\beta \geq \alpha$.}
\par\smallskip\noindent
{\it Proof of Claim}. Let $\widetilde{Y}$ be the one-point compactification of $Y$. Represent $\widetilde{Y}$ 
as the limit of a factorizing $\omega$-spectrum ${\mathcal S}_{\widetilde{Y}} = \{ \widetilde{Y}_{\alpha}, \widetilde{q}_{\alpha}^{\beta}, B\}$, 
consisting of metrizable compact spaces $\widetilde{Y}_{\alpha}$ and surjective projections. Since $Y$ is locally compact and 
Lindel\"{o}f, it follows that $Y$ is functionally open in $\widetilde{Y}$. Since ${\mathcal S}_{\widetilde{Y}}$ is a 
factorizing spectrum, there exists an index $\alpha_{0} \in B$ such that $Y = \widetilde{q}_{\alpha_{0}}^{-1}(\widetilde{q}_{\alpha_{0}}(Y))$. 
Let $A = \{ \alpha \in B \colon \alpha \geq \alpha_{0}\}$, $Y_{\alpha} = \widetilde{q}_{\alpha}(Y)$, and 
$q_{\alpha}^{\beta} = \widetilde{q}_{\alpha}^{\beta}|Y_{\beta}$, $\beta \geq \alpha$, $\alpha, \beta \in A$. 
Then the limit of the spectrum ${\mathcal S}_{Y} = \{ Y_{\alpha}, q_{\alpha}^{\beta}, A\}$ coincides with $Y$. 
Moreover, each $Y_{\alpha}$, $\alpha \in A$, is locally compact (as an open subspace of $\widetilde{Y}_{\alpha}$ and 
each $q_{\alpha}^{\beta} \colon Y_{\beta} \to Y_{\alpha}$ is proper (as the restriction of $\widetilde{q}_{\alpha}^{\beta}$ 
onto the inverse image $Y_{\beta} = (\widetilde{q}_{\alpha}^{\beta})^{-1}(Y_{\alpha})$). Finally, by \cite[Corollary 1.3.2]{chibook}, 
${\mathcal S}_{Y}$ is a factorizing $\omega$-spectrum. This proves our Claim.

\par\medskip\noindent
Since $X$ is closed in $Y$ it is the limit of the induced spectrum ${\mathcal S}_{X} = \{ X_{\alpha}, p_{\alpha}^{\beta}, A\}$, 
where $X_{\alpha} = q_{\alpha}(X)$ and $p_{\alpha}^{\beta} = q_{\alpha}^{\beta}|X_{\beta}$, 
$\beta \geq \alpha$, $\alpha, \beta \in A$. Note that ${\mathcal S}_{X}$ is also an $\omega$-spectrum. 
It is factorizing since so is ${\mathcal S}_{Y}$ and $X$ is $C$-embedded in $Y$.

By \cite[Theorems 1.3.4 and 1.3.10]{chibook}, we may assume without loss of generality that each $Y_{\alpha}$ in the 
spectrum ${\mathcal S}_{Y}$ is at most $n$-dimensional. Again by the Spectral Theorem \cite[Theorem 1.3.4]{chibook}, 
we may assume (if necessary passing to a cofinal and $\omega$-complete subset of $A$) that for each $\alpha \in A$ there 
is a map $f_{\alpha} \colon X_{\alpha} \to Y_{\alpha}$ such that $q_{\alpha}\circ f = f_{\alpha}\circ p_{\alpha}$. Since $f$ is 
fixed-point free and $X$ is Lindel\"{o}f, we can find a countable functionally open cover $\{ G_{i} \colon i \in \omega\}$ of $X$ and 
a countable collection $\{ U_{i} \colon \in \omega\}$ of functionally open subsets of $Y$ such that $f(G_{i}) \subset U_{i}$ and 
$U_{i}\cap G_{i} = \emptyset$ for each $i \in \omega$. Factorizability of spectra ${\mathcal S}_{X}$ and ${\mathcal S}_{Y}$ 
guarantees (see \cite[Proposition 1.3.1]{chibook}) the existence of an index $\alpha_{i} \in A$ such that 
$G_{i} = p_{\alpha_{i}}^{-1}(p_{\alpha_{i}}(G_{i}))$ and $U_{i} = (q_{\alpha_{i}})^{-1}(q_{\alpha_{i}}(U_{i}))$, $i \in \omega$. 
Choose $\beta \in A$ so that $\beta \geq \alpha_{i}$ for each $i \in \omega$  -- this is possible because $A$ 
is an $\omega$-complete set (see \cite[Corollary 1.1.28]{chibook}) --  and note that $G_{i} = p_{\beta}^{-1}(p_{\beta}(G_{i}))$ and 
$U_{i} = (q_{\beta})^{-1}(q_{\beta}(U_{i}))$ for each $i \in \omega$. Obviously, $p_{\beta}(G_{i}) \cap q_{\beta}(U_{i}) = \emptyset$, 
$i \in \omega$. We claim that $f_{\beta} \colon X_{\beta} \to Y_{\beta}$ is fixed-point free. Assuming the opposite, pick a point 
$x \in X_{\beta}$ with $f_{\beta}(x) = x$, choose $y \in X$ so that $p_{\beta}(y) = x$ and pick an index $i \in \omega$ such that 
$y \in G_{i} = p_{\beta}^{-1}(p_{\beta}(G_{i}))$. Note that $f(y) \in U_{i}$ and $q_{\beta}(f(y)) \in q_{\beta}(U_{i})$. But 
$q_{\beta}(f(y)) = f_{\beta}(p_{\beta}(y)) = f_{\beta}(x) = x \in p_{\beta}(G_{i})$ which is impossible since 
$p_{\beta}(G_{i}) \cap q_{\beta}(U_{i}) = \emptyset$.

By Proposition \ref{P:chromatic}, $f_{\beta}$ is colorable in $n+3$ colors and there exists a closed cover 
$\{ F_{1},\dots, F_{n+3}\}$ of $X_{\beta}$ such that $cl_{Y_\beta}(f_{\beta}(F_{j})) \cap F_{j} = \emptyset$ for each $j = 1,\dots,n+3$. 
It only remains to note that $\{ p_{\beta}^{-1}(F_{1}),\dots,p_{\beta}^{-1}(F_{n+3})\}$ is a closed cover of 
$X$ and $f(p_{\beta}^{-1}(F_{j})) \cap p_{\beta}^{-1}(F_{j}) =\emptyset$ for each $j = 1,\dots, n+3$.
\end{proof}

\par\bigskip\noindent
\begin{lem}\label{LEMMA:PARACOMPACT1}
Let $X$ be a closed subspace of a locally compact paracompact space $Y$ and $f:X\to Y$ continuous. Then
$Y$ can be written as $\oplus \{Y_\alpha: \alpha\ is \ isolated, \alpha \leq\tau\}$, where
$Y_\alpha$ is Lindel\"of and $f(X\cap Y_\alpha)\subset \bigcup_{\beta\leq \alpha } Y_\beta$.
\end{lem}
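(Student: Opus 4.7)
The plan is to leverage the standard structure theorem that every locally compact paracompact Hausdorff space admits a decomposition $Y = \bigoplus_{s \in S} Z_s$ as a topological sum of clopen $\sigma$-compact (equivalently, Lindel\"of) subspaces $Z_s$. I will refer to these $Z_s$ as \emph{atoms}; each $Y_\alpha$ will be built as a union of atoms, so it will automatically be clopen in $Y$. The real content is to arrange the atoms transfinitely in such a way that $f$ always points ``backward or horizontally'' in the resulting ordering.

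The key tool is a forward-closure operation. For each atom $Z_s$, the closed set $X \cap Z_s$ of the Lindel\"of space $Z_s$ is Lindel\"of, so $f(X \cap Z_s)$ is Lindel\"of; being covered by the disjoint clopen family $\{Z_t\}_{t \in S}$, it meets only countably many atoms. Let $\operatorname{out}(Z_s)$ denote that countable family and set
\[
\operatorname{out}^{\ast}(Z_s) \;=\; \bigcup_{k \ge 0} \operatorname{out}^{(k)}(Z_s).
\]
A routine countable induction shows $\operatorname{out}^{\ast}(Z_s)$ is a countable collection of atoms; its union $\bigcup \operatorname{out}^{\ast}(Z_s)$ is thus Lindel\"of, and if $z \in X$ lies in some atom of $\operatorname{out}^{\ast}(Z_s)$, so does $f(z)$.

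Next I would well-order $S$ and run a transfinite recursion: at each successor ordinal $\alpha$, if some atom is still unassigned, pick the atom $Z_{s_\alpha}$ of least index in the well-order among those not contained in any $Y_\beta$ with $\beta<\alpha$, and define $Y_\alpha$ as the union of all atoms in $\operatorname{out}^{\ast}(Z_{s_\alpha})$ not already absorbed. At limit $\alpha$ (and once every atom has been used) set $Y_\alpha = \emptyset$, matching the favorable-representation convention; let $\tau$ be the first ordinal at which every atom has been absorbed. Each $Y_\alpha$ is then a union of atoms, hence clopen in $Y$, and it is a subfamily of the countable family $\operatorname{out}^{\ast}(Z_{s_\alpha})$, hence Lindel\"of. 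Finally, for $x \in X \cap Y_\alpha$ the image $f(x)$ lies in $\bigcup \operatorname{out}^{\ast}(Z_{s_\alpha})$, which by construction equals $Y_\alpha \cup \bigcup_{\beta<\alpha} Y_\beta$, yielding the required inclusion $f(X \cap Y_\alpha) \subseteq \bigcup_{\beta \le \alpha} Y_\beta$.

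The only substantive verification is the countability of $\operatorname{out}^{\ast}(Z_s)$; everything else is bookkeeping about well-orders and unions of atoms. The main obstacle in a weaker setting would be the failure of the decomposition $Y = \bigoplus Z_s$ itself: without local compactness plus paracompactness supplying a clopen partition into Lindel\"of pieces, no analogous ``atomic'' forward-closure argument would be available.
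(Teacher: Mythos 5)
Your argument is correct and is essentially the paper's own proof: both start from the Engelking decomposition of a locally compact paracompact space into clopen Lindel\"of summands and then, at each stage, group the summands by taking the countable forward closure under $f$ of a least not-yet-absorbed summand (your $\operatorname{out}^{\ast}(Z_{s})$ is exactly the paper's $\bigcup_n S_n$). The only slip is that $\bigcup\operatorname{out}^{\ast}(Z_{s_\alpha})$ is merely \emph{contained in}, not equal to, $Y_\alpha\cup\bigcup_{\beta<\alpha}Y_\beta$ (earlier $Y_\beta$'s need not be reachable from $Z_{s_\alpha}$), but that containment is all you need.
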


\begin{proof}
By Theorem 5.1.27 in  \cite{Eng}, $Y$ can be written as $\oplus_{\alpha<\kappa}Z_\alpha$, where each $Z_\alpha$ is
Lindel\"of. Suppose for every isolated $\beta<\alpha$, where $\alpha$ is isolated, $Y_\beta$ is defined and the following hold:
\begin{enumerate}
	\item If $Y_\beta$ meets $Z_\gamma$ then $Y_\beta$ contains $Z_\gamma$;
	\item $f(Y_\beta)\subset \bigcup_{\gamma\leq \beta}Y_\gamma$; 
	\item $Y_\beta$ is clopen in $Y$; and
\item $Y_\beta $ is Lindel\"of.
\end{enumerate}
If $\bigcup_{\beta<\alpha}Y_\beta$ covers all of $Y$ then 
put $\tau =\alpha$ if $\alpha$ is limit and $\tau =\lambda$ if $\alpha=\lambda+1$ (notice that $\alpha$ cannot be $0$
if $Y$ is not empty).
The family
$\oplus \{Y_\beta: \beta\leq \tau, \beta\ is\ isolated\}$ is a desired sum
by properties 1-4.
Otherwise, we define the next piece as follows.

\par\bigskip\noindent
{\it Construction of $Y_\alpha$}: Let $\lambda_0$ be an ordinal such that $Z_{\lambda_0}$ does not meet
$\bigcup_{\beta<\alpha} Y_\beta$. Put $S_0 =\{\lambda_0\}$ and, recursively, 
$S_{n+1} = \{\lambda: Z_\lambda\ meets f(Z_\gamma), \ \gamma\in S_n\}$.
Since $Z_\gamma$ is Lindel\"of for  all $\gamma$, $f(Z_\gamma)$ can meet only countably many summands in our original free sum.
Therefore, $S_n$ is countable.
Put $Y_\alpha =\bigcup\{Z_\gamma: \gamma\in \bigcup_n S_n,\ Z_\gamma\ misses \ \bigcup_{\beta<\alpha}Y_\beta\}$.
Inductive requirements 1 and 2 are explicitly incorporated in the construction. Requirement 3 is met because
$Y_\alpha$ is the union of a discrete subfamily of clopen sets. Requirement 4 follows from the Lindel\"of
property of every $Z_\gamma$ and countability of $S_n$.
 \end{proof}

\par\bigskip\noindent
\begin{lem}\label{LEMMA:PARACOMPACT2}
Let $Y=\oplus\{Y_\alpha:\alpha\leq\tau,\ \alpha\ is \ isolated\}$, where
each $Y_\alpha$ is locally compact and Lindel\"of; $\dim Y\leq n$; and $X$ closed in $Y$.
Suppose $f:X\to Y$ is continuous and the following hold:

\par\smallskip\noindent
(*) $X$ misses $Y_0$; and 
\par\smallskip\noindent
(**) $f(X\cap Y_\alpha)\subset \bigcup_{\beta<\alpha}Y_\beta$ if $\alpha\not = 0$.

\par\smallskip\noindent
Then there exists a coloring $\mathcal F$ of $f$ such that $cl_Y(f(F))\cap F = \emptyset$
for every $F\in {\mathcal F}$.
\end{lem}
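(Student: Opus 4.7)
The plan is to reduce the statement directly to Lemma \ref{L:GOOD} applied with the given decomposition $\{Y_\alpha\}$, after extending the indexing to all ordinals $\alpha\leq\tau$ by the convention of that lemma, that is, by declaring $Y_\alpha=\emptyset$ whenever $\alpha\leq\tau$ is limit. Once the hypotheses of Lemma \ref{L:GOOD} are verified, its conclusion is precisely what is being asserted.

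The first task is to check that $\{Y_\alpha:\alpha\leq\tau,\ \alpha\ \text{isolated}\}$ is a favorable representation of $Y$. Property 1 is the defining equality of the free sum. Property 3 is vacuous, since distinct summands of a free sum are disjoint. Property 4 is clear: any union of components of a free sum is clopen, and clopen subsets of the normal space $Y$ are functionally closed. The mildly delicate item is property 2. Given $A\subset Y$ such that each $A\cap Y_\alpha$ is functionally closed in $Y_\alpha$, pick continuous $\varphi_\alpha:Y_\alpha\to[0,1]$ with $\varphi_\alpha^{-1}(0)=A\cap Y_\alpha$; the piecewise definition $\varphi|_{Y_\alpha}=\varphi_\alpha$ produces a continuous function $\varphi:Y\to[0,1]$ because the $Y_\alpha$ are clopen, and $\varphi^{-1}(0)=A$, so $A$ is functionally closed in $Y$.

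Next I would observe that $Y$ satisfies the remaining hypotheses of Lemma \ref{L:GOOD}. The dimension bound $\dim Y\leq n$ is given. Normality holds because each $Y_\alpha$ is locally compact and Lindel\"of, hence paracompact, hence normal, and the free sum of normal spaces is normal. The first hypothesis of Lemma \ref{L:GOOD}, that $X$ misses $Y_0$, is exactly our hypothesis $(*)$. The second hypothesis, that $z\in Y_\alpha\cap X$ implies $f(z)\in\bigl[\bigcup_{\beta<\alpha}Y_\beta\bigr]\setminus Y_\alpha$, follows from $(**)$ together with pairwise disjointness of the summands.

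With all hypotheses in place, Lemma \ref{L:GOOD} supplies a $(2n+2)$-sized coloring $\mathcal F$ of $f$ with $\mathrm{cl}_Y(f(F))\cap F=\emptyset$ for every $F\in\mathcal F$, which is exactly the conclusion of the lemma. The main (indeed only) nontrivial point in the argument is verifying condition 2 of favorable representation, and even that collapses to the observation that the $Y_\alpha$ are clopen; everything else is a direct invocation of Lemma \ref{L:GOOD}.
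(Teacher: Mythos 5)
Your proposal is correct and follows the same route as the paper: the paper's proof likewise observes that the free sum is a favorable representation and then checks hypotheses 1 and 2 of Lemma \ref{L:GOOD} via $(*)$, $(**)$, and disjointness of the summands. You simply spell out the verification of the favorable-representation axioms (in particular property 2 via gluing functions on clopen summands), which the paper leaves as ``clearly.''
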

\begin{proof}
Clearly, the free sum in the hypothesis is a favorable representation of $Y$.
Let us show that the conditions of Lemma \ref{L:GOOD} are met.
Requirement 1 of Lemma \ref{L:GOOD} is met by (*). Requirement 2 of Lemma \ref{L:GOOD}
 is met by (**) and the fact that $Y_\alpha$ misses $\bigcup_{\beta<\alpha}Y_\beta$,
which is thanks to our free sum representation.
\end{proof}

\par\bigskip\noindent
\begin{thm}\label{THEOREM:PARACOMPACT}
Any fixed-point free map $f \colon X \to Y$, defined on a closed subspace $X$ of a locally compact and 
paracompact space $Y$ of dimension $\dim Y \leq n$, is colorable.
In addition, there exists a coloring $\mathcal F$ of $f$ such that $cl_Y(f(F))\cap F=\emptyset$
for every $F\in {\mathcal F}$.
\end{thm}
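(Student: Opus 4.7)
The plan is to reduce the problem to two sub-cases handled by Lemma \ref{LEMMA:PARACOMPACT2} and Proposition \ref{P:lc}, by splitting $X$ according to how $f$ interacts with the free-sum decomposition produced by Lemma \ref{LEMMA:PARACOMPACT1}. First I would apply Lemma \ref{LEMMA:PARACOMPACT1} to write $Y=\oplus\{Y_\alpha : \alpha\leq\tau,\ \alpha\text{ isolated}\}$ with each $Y_\alpha$ clopen, Lindel\"of, locally compact, and of dimension at most $n$, satisfying $f(X\cap Y_\alpha)\subset\bigcup_{\beta\leq\alpha}Y_\beta$. The gap between this non-strict inclusion and the strict inclusion required by Lemma \ref{LEMMA:PARACOMPACT2} is precisely what the proof has to bridge.

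To bridge it, partition $X$ into
$$A=\bigcup_\alpha\bigl(X\cap Y_\alpha\cap f^{-1}(Y_\alpha)\bigr)\quad\text{and}\quad B=X\setminus A.$$
Since $\{Y_\alpha\}$ is a discrete family of clopen sets, each summand in the union defining $A$ is closed in $X$ and the whole union is closed; hence $B$ is closed as well. Every $x\in B$ with $x\in Y_\alpha$ satisfies $f(x)\in\bigcup_{\beta<\alpha}Y_\beta$, so in particular $B\cap Y_0=\emptyset$ and the hypotheses of Lemma \ref{LEMMA:PARACOMPACT2} hold for $f|_B$. This produces a coloring $\{G_1,\ldots,G_m\}$ of $f|_B$ with $cl_Y(f(G_k))\cap G_k=\emptyset$.

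For $A$, each restriction $f|_{A\cap Y_\alpha}\colon A\cap Y_\alpha\to Y_\alpha$ is a fixed-point free map on a closed subset of an at-most-$n$-dimensional locally compact Lindel\"of space, so Proposition \ref{P:lc} applies and yields a coloring $\{F_1^\alpha,\ldots,F_{n+3}^\alpha\}$ with $cl_{Y_\alpha}(f(F_j^\alpha))\cap F_j^\alpha=\emptyset$. Setting $F_j=\bigcup_\alpha F_j^\alpha$ for $j\leq n+3$, the discreteness of $\{Y_\alpha\}$ ensures that each $F_j$ is closed in $Y$ and that $cl_Y(f(F_j))=\bigcup_\alpha cl_{Y_\alpha}(f(F_j^\alpha))$; since sets living in different $Y_\alpha$ cannot meet, this forces $cl_Y(f(F_j))\cap F_j=\emptyset$.

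The combined family $\{F_1,\ldots,F_{n+3},G_1,\ldots,G_m\}$ is then a finite closed cover of $X=A\cup B$ that colors $f$ with the strong closure-disjoint property required by the theorem. The main technical subtlety is the interchange of closure with the union defining $F_j$, which rests entirely on $\{Y_\alpha\}$ being a discrete family of clopen sets; once that observation is in place, the two halves of the argument assemble routinely into the desired coloring.
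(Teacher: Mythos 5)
Your proposal is correct and is essentially the paper's own argument: your set $B$ is exactly the paper's $Z=\oplus_{0<\alpha\le\tau}\bigl(f^{-1}(\oplus_{\beta<\alpha}Y_\beta)\cap Y_\alpha\bigr)$ and your $A$ is the union of the paper's pieces $P_\alpha=(X\cap Y_\alpha)\setminus Z_\alpha$, handled respectively by Lemma \ref{LEMMA:PARACOMPACT2} and by Proposition \ref{P:lc} applied fiberwise over the discrete clopen family $\{Y_\alpha\}$, just as in the paper. The only cosmetic difference is that you phrase the dichotomy as ``$f(x)\in Y_\alpha$'' versus ``$f(x)\in\bigcup_{\beta<\alpha}Y_\beta$'' rather than naming $Z_\alpha$ first, and both verifications (clopenness of the pieces and the interchange of closure with the discrete union) match the paper's.
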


\begin{proof}
Represent $Y$ as $\oplus\{Y_\alpha: \alpha\leq \tau,\ \alpha\ is\ isolated\}$ as 
in the conclusion of Lemma \ref{LEMMA:PARACOMPACT1}.
For each isolated $\alpha\leq \tau$ not equal $0$, put $Z_\alpha = f^{-1}(\oplus_{\beta<\alpha} Y_\beta)\cap Y_\alpha$.
Observe that each $Z_\alpha$ is clopen in $X$ being the inverse image of 
the clopen set $\oplus_{\beta<\alpha} Y_\beta$ intersected with the clopen set $Y_\alpha$.
Put $Z=\oplus_{0<\alpha\leq \tau} Z_\alpha$. The triple $\{Z, f|_Z, Y\}$ satisfies the hypothesis
of Lemma \ref{LEMMA:PARACOMPACT2}. Therefore we can find a finite cover ${\mathcal F}_Z$ of $Z$ by colors
such that $cl_Y(f(F))\cap F=\emptyset$ for every $F\in {\mathcal F}_Z$.

For each isolated $\alpha\leq \tau$, put $P_\alpha = (X\cap Y_\alpha)\setminus Z_\alpha$. Since $Z_\alpha$ is open,
$P_\alpha$ is closed in $Y_\alpha$. Since $f(Y_\alpha)\subset \bigcup_{\beta\leq \alpha}Y_\beta$ 
and $P_\alpha$ is outside of $Z_\alpha$, we conclude that $f(P_\alpha)\subset Y_\alpha$.
For each  isolated $\alpha\leq \tau$ the triple $\{Y_\alpha, P_\alpha, f|_{P_\alpha}\}$ meets
the requirement of Proposition \ref{P:lc}. Therefore, we can fix an ($n+3$)-sized
family ${\mathcal F}_\alpha = \{F^\alpha_1,....,F^\alpha_{n+3}\}$, which is a coloring for $f|_{P_\alpha}$
and $cl_Y(f(F))\cap F=\emptyset$ for every $F\in {\mathcal F}_\alpha$.
Put $F_i = \oplus \{F_i^\alpha: \alpha\leq \tau\}$. 
Each $F_i$ is closed as the union of a discrete family of closed sets. Since $f(F_i^\alpha)\subset Y_\alpha$,
we conclude that $cl_Y(f(F_i^\alpha))$ misses $F_i^\beta$ if $\alpha\not=\beta$. By the choice of ${\mathcal F}_\alpha$,
$cl_Y(f(F_i^\alpha))$ misses $F_i^\alpha$. Therefore, $cl_Y(f(F_i))\cap F_i=\emptyset$.
Thus, ${\mathcal F}_P = \{F_i:i\leq n+3\}$ covers
$\oplus_{\alpha} P_\alpha$ by colors
and $cl_Y(f(F))\cap F=\emptyset$ for every $F\in {\mathcal F}_P$. Thus, ${\mathcal F}_Z\cup {\mathcal F}_P$ 
is a desired coloring of $f$.
\end{proof}

\par\bigskip
The existence of a special coloring in Theorem \ref{THEOREM:PARACOMPACT} implies that $\widetilde f:\beta X\to \beta Y$
is fixed-point free as well. Therefore, we have the following equivalence statement.
\par\bigskip\noindent
\begin{thm}\label{THEOREM:EQUIVALENCE}
Let $f \colon X \to Y$ be a continuous map  defined on a closed subspace $X$ of a locally compact and 
paracompact space $Y$ of dimension $\dim Y \leq n$. Then $f$ is  fixed-point free iff $\widetilde f:\beta X\to \beta Y$ is
fixed-point free.
\end{thm}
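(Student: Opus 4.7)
The ``if'' direction is essentially trivial: any fixed point of $f$ is immediately a fixed point of its extension $\widetilde f$, so if $\widetilde f$ is fixed-point free then so is $f$. All of the content lies in the converse, and the plan is to extract it directly from the special coloring provided by Theorem \ref{THEOREM:PARACOMPACT}.

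Assume $f$ is fixed-point free. By Theorem \ref{THEOREM:PARACOMPACT} there is a finite closed cover $\mathcal F$ of $X$ such that $cl_Y(f(F)) \cap F = \emptyset$ for every $F \in \mathcal F$. Since $Y$ is locally compact and paracompact, it is normal; and since $X$ is closed in $Y$, each $F \in \mathcal F$ is also closed in $Y$ and $X$ is $C^*$-embedded in $Y$ (by Tietze's theorem). The latter fact identifies $\beta X$ with $cl_{\beta Y}(X)$ inside $\beta Y$ and, more generally, gives $cl_{\beta Y}(F) \cap \beta X = cl_{\beta X}(F)$ for each $F \in \mathcal F$, so the statement ``$\widetilde f$ is fixed-point free'' makes sense unambiguously as ``$\widetilde f(p) \neq p$ in $\beta Y$ for every $p \in \beta X$.''

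Now I would exploit normality of $Y$: for each $F \in \mathcal F$, the sets $F$ and $cl_Y(f(F))$ are disjoint closed subsets of $Y$, hence their closures in $\beta Y$ remain disjoint,
$$cl_{\beta Y}(F) \cap cl_{\beta Y}(f(F)) = \emptyset.$$
By continuity of $\widetilde f$ we have $\widetilde f(cl_{\beta X}(F)) \subset cl_{\beta Y}(f(F))$, and combining this with $cl_{\beta X}(F) \subset cl_{\beta Y}(F)$ yields
$$\widetilde f(cl_{\beta X}(F)) \cap cl_{\beta X}(F) = \emptyset.$$
Since $\mathcal F$ is a finite cover of $X$, the family $\{cl_{\beta X}(F) : F \in \mathcal F\}$ covers $\beta X$; any $p \in \beta X$ lies in some $cl_{\beta X}(F)$, and the displayed disjointness forces $\widetilde f(p) \neq p$. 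Thus $\widetilde f$ is fixed-point free.

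There is no real obstacle to overcome at this stage: the genuine work was carried out in Theorem \ref{THEOREM:PARACOMPACT}, where the coloring was produced with the strengthened separation property $cl_Y(f(F)) \cap F = \emptyset$ (rather than merely $f(F) \cap F = \emptyset$) precisely so that the argument above could be used. The only point requiring mild care is the identification of $\beta X$ with a subspace of $\beta Y$, which rests on $C^*$-embeddedness of $X$ in $Y$ and in turn on normality of $Y$.
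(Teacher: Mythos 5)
Your proof is correct and takes exactly the route the paper intends: the paper "proves" this theorem in a single sentence by observing that the special coloring with $cl_Y(f(F))\cap F=\emptyset$ from Theorem \ref{THEOREM:PARACOMPACT} yields the fixed-point freeness of $\widetilde f$, and your argument simply fills in that implication (disjoint closed sets in the normal space $Y$ have disjoint closures in $\beta Y$, and $\widetilde f(cl_{\beta X}(F))\subset cl_{\beta Y}(f(F))$). No differences of substance.
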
 

\par\bigskip
We finish this section by giving an estimate for the chromatic number for maps as in the hypothesis
of Theorem \ref{THEOREM:PARACOMPACT}.

\par\bigskip\noindent
\begin{thm}\label{THEOREM:GENERAL}
Any continuous fixed-point free map $f \colon X \to Y$, defined on a closed subspace $X$ of a locally compact and 
paracompact space $Y$ of dimension $\dim Y \leq n$ is colorable in at most $(n+3)$ colors.
\end{thm}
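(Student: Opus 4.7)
The plan is to combine Theorem \ref{THEOREM:PARACOMPACT} with the normal-space version of the Pol theorem noted just after Corollary \ref{R2N2R2N}. Theorem \ref{THEOREM:PARACOMPACT} already produces a finite closed cover $\{F_1,\dots,F_m\}$ of $X$ with $\mathrm{cl}_Y(f(F_j))\cap F_j=\emptyset$, so $f$ is finitely colorable. What remains is to compress the coloring down to at most $n+3$ colors.

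Since $Y$ is locally compact and paracompact it is normal, and since $X$ is closed in $Y$ we have $\dim X\le\dim Y\le n$. Pol's theorem (in its normal-space form) asserts that a finitely colorable fixed-point free self-map of an at most $n$-dimensional normal space admits a coloring in $n+3$ colors. To apply it here, I would reduce to the self-map case in the spirit of Lemma \ref{L:emb+1} and Proposition \ref{P:chromatic}: build a locally compact paracompact space $\tilde Y\supset X$ of dimension $\le n$ together with a fixed-point free self-map $\tilde f\colon \tilde Y\to\tilde Y$ such that $\tilde f|_X=f$. The finite coloring above supplies enough separation (via partitions of unity) to extend $f$ off $X$, and a $[0,\infty)$-product trick as in Lemma \ref{L:emb+1} absorbs any fixed points that appear outside $X$. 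Pol's theorem then colors $\tilde f$ in $n+3$ classes, and restricting each class to $X$ gives the desired $(n+3)$-coloring of $f$.

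The main obstacle is carrying out this self-map reduction in the general locally compact paracompact setting: Lemma \ref{L:emb+1} is stated for metrizable $\mathrm{AE}(n)$ codomains and does not directly apply. To get around this, I would work piecewise on the decomposition $Y=\oplus_\alpha Y_\alpha$ into Lindel\"of summands supplied by Lemma \ref{LEMMA:PARACOMPACT1}, mirroring the structure of the proof of Theorem \ref{THEOREM:PARACOMPACT}. Proposition \ref{P:lc} already delivers $(n+3)$-colorings with the strong property $\mathrm{cl}_Y(f(F))\cap F=\emptyset$ inside each Lindel\"of piece, and the ordinal/agreed-coloring machinery of Lemmas \ref{L:GOOD4LIMIT} and \ref{L:GOOD} is precisely designed to stitch these local colorings together along the cross-component flow without inflating the total number of colors beyond $n+3$. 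Once the pieces are merged in this synchronized way, the resulting global cover is the sought $(n+3)$-sized coloring of $f$.
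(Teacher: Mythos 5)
Your plan has a genuine gap at exactly the step that matters: compressing from ``finitely colorable'' to ``$n+3$ colors''. The Pol theorem, as used in this paper, is a statement about \emph{self}-maps, so you correctly see that you need a self-map reduction; but your two proposed ways of obtaining one both fail. The extension-off-$X$ idea (``partitions of unity'') is not available: a general locally compact paracompact $Y$ of dimension $\le n$ is not an $AE(n)$, and there is no way to average or interpolate values in an abstract $Y$, so $f\colon X\to Y$ need not extend to any neighborhood of $X$; Lemma \ref{L:emb+1} really does need the metrizable $AE(n)$ hypothesis. Your fallback --- stitching the per-piece $(n+3)$-colorings from Proposition \ref{P:lc} using the machinery of Lemmas \ref{L:GOOD4LIMIT} and \ref{L:GOOD} --- does not preserve the bound: Lemma \ref{L:GOOD} is intrinsically a $2(n+1)$-color construction (the pairs $A_i,B_i$, $i\le n+1$, come from the opposite faces of $\partial I^{n+1}$ in Lemma \ref{L:ENLARGE}), and the proof of Theorem \ref{THEOREM:PARACOMPACT} combines ${\mathcal F}_Z$ (up to $2n+2$ colors) with ${\mathcal F}_P$ ($n+3$ colors), yielding on the order of $3n+5$ colors in total. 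Nothing in that machinery is ``designed'' to keep the count at $n+3$, so your argument establishes finite colorability (which is Theorem \ref{THEOREM:PARACOMPACT}) but never reaches the stated bound.

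The paper's actual proof sidesteps all of this in two lines: by Theorem \ref{THEOREM:EQUIVALENCE} the extension $\widetilde f\colon\beta X\to\beta Y$ is fixed-point free; $\beta Y$ is compact, hence locally compact and Lindel\"{o}f, still of dimension $\le n$ (as $Y$ is normal), and $\beta X=\operatorname{cl}_{\beta Y}X$ is closed in $\beta Y$; so Proposition \ref{P:lc} colors $\widetilde f$ in at most $n+3$ colors, and restricting that closed cover to $X$ colors $f$. The self-map reduction you were looking for is hidden inside Proposition \ref{P:lc} (spectral reduction to the separable metrizable case, then the lift to a self-map of $N_n^{2n+1}$ in Proposition \ref{P:chromatic}, where Pol's theorem legitimately applies). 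If you want to salvage your outline, replace the stitching step by this passage to $\beta Y$.
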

\begin{proof}
By Theorem \ref{THEOREM:EQUIVALENCE}, $\widetilde{f}$ has no fixed points. 
Consequently, by Proposition \ref{P:lc}, $\widetilde{f}$ is colorable in at most $n+3$ colors, 
which implies that the chromatic number of $f$ also does not exceed $n+3$.
\end{proof}


\section{Examples and Comments}\label{S:examples}

As mentioned in the Introduction Mazur constructed a continuous fixed-point free 
non-colorable map of the space of irrationals into itself. The space $P$ of irrational 
numbers is the first element in the hierarchy $\{ N_{n}^{2n+1} \colon n \in \omega\}$ of 
$n$-dimensional universal N\"{o}beling spaces (i.e. $P = N_{0}^{1}$). 

\begin{pro}\label{P:nobeling}
Let $n \in \omega$. There exists a continuous fixed-point free non-colorable self-map of the 
$n$-dimensional universal N\"{o}beling space $N_{n}^{2n+1}$.
\end{pro}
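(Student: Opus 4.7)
The plan is to transplant Mazur's example on $P = N_{0}^{1}$ into $N_{n}^{2n+1}$ via a product decomposition, so that non-colorability follows by slicing.

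\emph{Step 1.} Let $f_{0}\colon P\to P$ be Mazur's continuous fixed-point free non-colorable self-map, recalled at the opening of this section.

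\emph{Step 2.} I would first establish the homeomorphism $N_{n}^{2n+1}\cong N_{n}^{2n+1}\times P$. The product $N_{n}^{2n+1}\times P$ is Polish, has dimension exactly $n$ (it has dimension at most $\dim N_{n}^{2n+1}+\dim P = n$ and contains $N_{n}^{2n+1}\times\{\ast\}$ as a closed copy of $N_{n}^{2n+1}$), is an $\operatorname{AE}(n)$-space as the product of an $\operatorname{AE}(n)$-space with a $0$-dimensional metric space, and inherits strong $n$-universality from its first factor. The characterization theorem for N\"obeling spaces (available in \cite{chibook}) then yields a homeomorphism $\varphi\colon N_{n}^{2n+1}\to N_{n}^{2n+1}\times P$.

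\emph{Step 3.} Define
\[
g \;=\; \varphi^{-1}\circ\bigl(\operatorname{id}_{N_{n}^{2n+1}}\times f_{0}\bigr)\circ\varphi \colon N_{n}^{2n+1}\to N_{n}^{2n+1}.
\]
Continuity is clear. If $g(x)=x$, applying $\varphi$ gives $(\operatorname{id}\times f_{0})(\varphi(x))=\varphi(x)$, which forces the $P$-coordinate of $\varphi(x)$ to be a fixed point of $f_{0}$, contradicting Step 1.

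\emph{Step 4.} Suppose $\{F_{1},\dots,F_{k}\}$ colors $g$. Setting $F_{i}' = \varphi(F_{i})$, we obtain a closed cover of $N_{n}^{2n+1}\times P$ satisfying $(\operatorname{id}\times f_{0})(F_{i}')\cap F_{i}' = \emptyset$. Fix any $y_{0}\in N_{n}^{2n+1}$ and define $F_{i}'' = \{p\in P\colon (y_{0},p)\in F_{i}'\}$. The slice map $p\mapsto(y_{0},p)$ is a closed embedding, so $\{F_{i}''\}_{i=1}^{k}$ is a finite closed cover of $P$, and one reads off $f_{0}(F_{i}'')\cap F_{i}'' = \emptyset$ for every $i$. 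This finite coloring of $f_{0}$ contradicts Step 1. Hence $g$ is non-colorable.

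The main obstacle is Step 2, which relies on the nontrivial characterization theorem for N\"obeling spaces; the remaining steps are a routine transfer of structure through the homeomorphism $\varphi$.
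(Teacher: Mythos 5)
There is a genuine gap, and it is fatal: the homeomorphism $N_{n}^{2n+1}\cong N_{n}^{2n+1}\times P$ claimed in Step 2 does not exist for $n\geq 1$. The quickest way to see this is connectivity: for $n\geq 1$ the space $N_{n}^{2n+1}$ is an $AE(n)$ and hence connected (indeed path-connected), whereas $N_{n}^{2n+1}\times P$ has the totally disconnected factor $P$ and therefore has uncountably many connected components. Your justification also fails at the point where you assert that the product of an $AE(n)$-space with a $0$-dimensional metric space is again $AE(n)$: a map into a product extends iff each coordinate map extends, so $X\times Y\in AE(n)$ forces $Y\in AE(n)$, and $P$ is only $AE(0)$, not $AE(1)$ (a two-point subset of an arc mapped to two distinct points of $P$ admits no continuous extension over the arc). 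So the characterization theorem for N\"obeling spaces, whatever form you invoke it in, cannot apply to $N_{n}^{2n+1}\times P$. Steps 3 and 4 (the transfer of fixed-point freeness and the slicing argument for non-colorability) would be fine if Step 2 held, but the whole construction collapses without it. Note the case $n=0$ is the only one where your product trick works, since $P\times P\cong P$ --- and that case is already Mazur's example.

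The paper gets around exactly this obstruction by replacing the (nonexistent) product structure with an $n$-soft map. One embeds $P=N_{0}^{1}$ as a closed subspace of $N_{n}^{2n+1}$, uses Lemma \ref{L:emb+1} to extend Mazur's map to a fixed-point free self-map $g$ of $N_{n}^{2n+1}\times[0,\infty)$ (the extra ray is what absorbs the fixed points of an arbitrary continuous extension), and then lifts $g$ along an $n$-soft map $p\colon N_{n}^{2n+1}\to N_{n}^{2n+1}\times[0,\infty)$ to a self-map $r$ of $N_{n}^{2n+1}$ satisfying $p\circ r=g\circ p$ and restricting, on a closed section $A\cong P$ of $p$, to a copy of Mazur's map. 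Fixed-point freeness of $r$ follows from that of $g$ via the intertwining relation, and non-colorability follows because $A$ is closed and $r|A$ is equivalent to $f$. If you want to salvage your outline, you would need to replace the homeomorphism $\varphi$ by such a soft-map lift; the slicing argument in your Step 4 then becomes the restriction-to-$A$ argument.
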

\begin{proof}
As noted, for $n = 0$ this is known. Let $n > 0$ and $f \colon N_{0}^{1} \to N_{0}^{1}$ be a 
continuous fixed-point free non-colorable map. Embed $N_{0}^{1}$ into $N_{n}^{2n+1}$ as a closed subspace. 
By Lemma \ref{L:emb+1}, since $\dim N_{n}^{2n+1} =n$ and $N_{n}^{2n+1} \in AE(n)$, we can embed 
$N_{0}^{1}$ into the product $N_{n}^{2n+1} \times [0,\infty)$ as a closed subspace in such a way that 
$f = g|N_{0}^{1}$, where $g \colon N_{n}^{2n+1}\times [0,\infty) \to N_{n}^{2n+1}\times [0,\infty)$ is a 
fixed-point free map. Let $p \colon N_{n}^{2n+1} \to N_{n}^{2n+1}\times [0,\infty)$ be an $n$-soft map. 
Choose a section $i \colon N_{0}^{1} \to N_{n}^{2n+1}$ of $p|p^{-1}(N_{0}^{1})$, i.e. $pi = \operatorname{id}_{N_{0}^{1}}$, 
and let $A = i(N_{0}^{1})$. Consider the map $j \colon A \to A$ defined by $j = i\circ f\circ p|A$ and note that 
$p\circ j = p\circ i\circ f\circ p|A = f\circ p|A = g\circ p|A$. Since $p$ is $n$-soft there exists a map 
$r \colon N_{n}^{2n+1}\to N_{n}^{2n+1}$ such that $p\circ r = g\circ p$ and $r|A = j$. The former implies that $r$ is 
fixed-point free. The latter implies that $r$ is not colorable (otherwise $j$ would have been colorable; but since $j$ is 
equivalent to $f$ this is impossible). 
\end{proof}

\begin{pro}\label{P:Hilbert}
There exists a continuous fixed-point free non-colorable self-map of the separable Hilbert space $\ell_{2}\simeq {\mathbb R}^{\omega}$.
\end{pro}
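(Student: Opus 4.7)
The plan is to adapt the argument of Proposition \ref{P:nobeling} by replacing the universal N\"obeling space $N_{n}^{2n+1}$ with $\ell_{2}$, and replacing the $n$-softness of the N\"obeling projection with the (much stronger) absolute extensor property enjoyed by $\ell_{2}$.

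First I would take $f \colon N_{0}^{1} \to N_{0}^{1}$ to be Mazur's continuous fixed-point free non-colorable self-map of the irrationals. Since $N_{0}^{1}$ is homeomorphic to $\mathbb{N}^{\omega}$, it embeds as a closed subspace of $\mathbb{R}^{\omega} \simeq \ell_{2}$. Since $\ell_{2}$ is an absolute extensor for metric spaces (Dugundji), the composition $N_{0}^{1} \overset{f}{\to} N_{0}^{1} \hookrightarrow \ell_{2}$ admits a continuous extension $h \colon \ell_{2} \to \ell_{2}$, which will typically have fixed points outside $N_{0}^{1}$.

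Next I would kill those unwanted fixed points via the product trick of Lemma \ref{L:emb+1}, using $\mathbb{R}$ in place of $[0,\infty)$. Define $g \colon \ell_{2} \times \mathbb{R} \to \ell_{2} \times \mathbb{R}$ by
\[
g(y, r) \;=\; \bigl(h(y),\; r + \operatorname{dist}(y, N_{0}^{1})\bigr).
\]
If $g(y, r) = (y, r)$, then $\operatorname{dist}(y, N_{0}^{1}) = 0$; since $N_{0}^{1}$ is closed in $\ell_2$, this forces $y \in N_{0}^{1}$, and then $h(y) = f(y) \neq y$, a contradiction. Thus $g$ is fixed-point free, and under the embedding $\iota \colon N_{0}^{1} \to \ell_{2} \times \mathbb{R}$, $y \mapsto (y, 0)$, the restriction $g|\iota(N_{0}^{1})$ is (a copy of) $f$. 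Since $\ell_{2} \times \mathbb{R} \simeq \ell_{2}$ (e.g.\ using that $\ell_{2}$ is linearly homeomorphic to $\ell_{2} \oplus \mathbb{R}$), $g$ transfers to a continuous fixed-point free self-map $\widetilde{g}$ of $\ell_{2}$ which contains $\iota(N_{0}^{1})$ as a closed subspace on which $\widetilde{g}$ acts as $f$.

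Finally, non-colorability of $\widetilde{g}$ is inherited from that of $f$: given any hypothetical finite closed cover $\{F_{1}, \ldots, F_{k}\}$ of $\ell_{2}$ with $\widetilde{g}(F_{i}) \cap F_{i} = \emptyset$, the traces $\{F_{i} \cap \iota(N_{0}^{1})\}_{i=1}^{k}$ would form a finite closed cover of $\iota(N_{0}^{1}) \simeq N_{0}^{1}$ witnessing colorability of $f$, contradicting the choice of $f$. Hence $\widetilde{g}$ is the desired self-map of $\ell_2$. The main obstacle is not a substantively new argument but the correct invocation of three classical facts: Dugundji extension into $\ell_{2}$, closed-subspace embeddability of Polish spaces in $\ell_{2}$, and the homeomorphism $\ell_{2} \simeq \ell_{2} \times \mathbb{R}$.
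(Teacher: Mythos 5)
Your argument is correct, but it is genuinely different from the one in the paper. You transfer Mazur's non-colorable fixed-point free map of the irrationals $N_{0}^{1}$ into $\ell_{2}$: closed embedding of $N_{0}^{1}\simeq\mathbb{N}^{\omega}$ into $\mathbb{R}^{\omega}\simeq\ell_{2}$, Dugundji extension (legitimate here because $\ell_{2}$ is an absolute extensor for metric spaces, which is exactly what lets you bypass the $n$-soft map machinery that Proposition \ref{P:nobeling} needs for $N_{n}^{2n+1}$), the fixed-point-killing product trick of Lemma \ref{L:emb+1}, the homeomorphism $\ell_{2}\times\mathbb{R}\simeq\ell_{2}$, and the standard restriction argument for non-colorability; every step checks out. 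The paper instead takes the antipodal map $f(\mathbf{x})=-\mathbf{x}$ on $\mathbb{P}=\mathbb{R}^{\omega}\setminus\{\mathbf{0}\}$, observes that the union $\mathbb{S}=\cup\,\mathbb{S}^{n}$ of spheres of growing dimension and shrinking radius is a closed subspace on which $f$ restricts to the antipodal maps, invokes van Douwen's observation (a Lusternik--Schnirelmann/Borsuk--Ulam count: coloring the antipodal map on $\mathbb{S}^{n}$ needs at least $n+3$ colors, so no finite coloring works on all of $\mathbb{S}$) to get a fixed point of the extension to $\beta\mathbb{S}$, and finishes with $\ell_{2}\simeq\mathbb{P}$. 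The trade-off: your route is a uniform transfer argument that parallels Proposition \ref{P:nobeling} and needs nothing beyond Mazur's example plus classical extension theory, but it produces only a continuous self-map; the paper's example is an explicit involutive \emph{autohomeomorphism}, which is the stronger statement advertised in the introduction and is the relevant contrast with van Douwen's theorem that fixed-point free autohomeomorphisms of finite-dimensional paracompact spaces are always colorable. For the proposition as literally stated, your proof suffices.
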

\begin{proof}
Let ${\mathbb S}^{n} = \{ x \in {\mathbb R}^{n+1} \colon || x|| = \frac{1}{n+1}\} \times \{ 0_{i}\}_{i \geq n+2} \subset {\mathbb R}^{\omega}$. 
The map $f({\mathbf x}) = -{\mathbf x}$ is fixed-point free self-map of ${\mathbb P} = {\mathbb R}^{\omega} \setminus \{ {\mathbf 0}\}$ 
and coincides with the antipodal map on each ${\mathbb S}^{n}$. According to Van Douwen's observation 
$\widetilde{g} \colon \beta {\mathbb S} \to \beta{\mathbb S}$, where ${\mathbb S} = \cup {\mathbb S}^{n}$ and 
$g = f|{\mathbb S}$, fixes a point. Since ${\mathbb S}$ is closed in ${\mathbb P}$, it follows that $\widetilde{f}$ 
fixes a point which means that $f$ is not colorable.  Since $\ell_{2}$ is homeomorphic to ${\mathbb P}$ the needed conclusion follows.
\end{proof}

\begin{pro}\label{P:closurefp}
Let $f \colon X\to X$ be a continuous self-map of a finite-dimensional locally compact paracompact space and 
$\widetilde{f} \colon \beta X \to \beta X$ be its extension. Then 
$\operatorname{Fix}(\widetilde{f}) = \operatorname{cl}_{\beta X}\operatorname{Fix}(f)$. 
\end{pro}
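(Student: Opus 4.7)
The plan is to handle the two inclusions separately. The inclusion $\operatorname{cl}_{\beta X}\operatorname{Fix}(f) \subset \operatorname{Fix}(\widetilde f)$ is immediate, since $\operatorname{Fix}(\widetilde f)$ is a closed subset of $\beta X$ (the equalizer of $\widetilde f$ and $\operatorname{id}_{\beta X}$) containing $\operatorname{Fix}(f)$. The content of the proposition lies in the reverse inclusion, which I would prove by contradiction using Theorem \ref{THEOREM:EQUIVALENCE} applied to a suitable restriction of $f$.

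Suppose then that $p \in \operatorname{Fix}(\widetilde f)$ but $p \notin \operatorname{cl}_{\beta X}\operatorname{Fix}(f)$. Using normality of the compact Hausdorff space $\beta X$, I would choose disjoint open sets $W_1 \ni p$ and $W_2 \supset \operatorname{cl}_{\beta X}\operatorname{Fix}(f)$, and set $V = \beta X \setminus W_2$. Then $V$ is closed in $\beta X$, contains the open neighborhood $W_1$ of $p$, and is disjoint from $\operatorname{Fix}(f)$. Let $X' = V \cap X$; this is a closed subspace of $X$, and $f|X' \colon X' \to X$ is fixed-point free by the choice of $V$.

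To bring $p$ into the picture supplied by Theorem \ref{THEOREM:EQUIVALENCE}, I would identify $\beta X'$ as a subspace of $\beta X$. Since $X$ is locally compact paracompact, hence normal, every closed subspace of $X$ is $C^\ast$-embedded in $X$, so $\beta X' = \operatorname{cl}_{\beta X} X'$. Because $X$ is dense in $\beta X$, the set $W_1 \cap X$ is dense in $W_1$, giving $p \in \operatorname{cl}_{\beta X}(W_1 \cap X) \subset \operatorname{cl}_{\beta X}(V \cap X) = \beta X'$. The restriction of $\widetilde f$ to $\beta X'$ is a continuous extension of $f|X'$ into $\beta X$, and therefore coincides with the Stone-\v{C}ech extension $\widetilde{f|X'} \colon \beta X' \to \beta X$ by uniqueness; in particular $\widetilde{f|X'}(p) = \widetilde f(p) = p$. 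On the other hand, Theorem \ref{THEOREM:EQUIVALENCE} applied to the fixed-point free map $f|X' \colon X' \to X$ (with $X$ finite-dimensional, locally compact, paracompact) says that $\widetilde{f|X'}$ is fixed-point free, yielding the desired contradiction.

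The main delicacy I anticipate is the identification $\beta X' = \operatorname{cl}_{\beta X} X'$ together with the verification that $p$ actually lies in this closure. This is precisely what forces $V$ to be taken as a closed set containing an \emph{open} neighborhood of $p$, rather than merely a closed set containing $p$; only then does a neighborhood base of $p$ in $\beta X$ meet $X'$ and force $p \in \operatorname{cl}_{\beta X} X'$. Everything else is a routine application of $C^\ast$-embedding and the uniqueness of Stone-\v{C}ech extensions.
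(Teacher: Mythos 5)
Your argument is correct, and it rests on the same pivot as the paper's proof --- deriving a contradiction from Theorem \ref{THEOREM:EQUIVALENCE} --- but the mechanism is genuinely different. The paper picks a functionally closed set $Z\subset X$ with $p\in\operatorname{cl}_{\beta X}Z$ and $Z\cap\operatorname{Fix}(f)=\emptyset$, takes $\varphi\colon X\to[0,\infty)$ with $\varphi^{-1}(0)=Z$, and builds the auxiliary \emph{self-map} $g(x,r)=(f(x),r+\varphi(x))$ of $X\times[0,\infty)$; this $g$ is fixed-point free on the whole product, Theorem \ref{THEOREM:EQUIVALENCE} makes $\widetilde g$ fixed-point free, and yet $\widetilde g(p)=\widetilde f(p)=p$ after identifying $\operatorname{cl}_{\beta X}Z$ with $\operatorname{cl}_{\beta(X\times[0,\infty))}(Z\times\{0\})$. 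You instead restrict $f$ to a closed set $X'=V\cap X$ missing $\operatorname{Fix}(f)$ and invoke the theorem in its full strength for maps $X'\to X$ defined on closed subspaces, which is exactly the form in which the paper states it; this avoids the product construction entirely and is the more economical route (the paper's product trick, also used in Lemma \ref{L:emb+1}, would be forced if the equivalence were only known for self-maps). The one point you rightly flag as delicate --- that $V$ must contain an \emph{open} neighborhood of $p$ so that $p\in\operatorname{cl}_{\beta X}(V\cap X)=\beta X'$ --- is handled correctly by your choice of $W_1$ and the density of $X$ in $\beta X$, and plays the same role as the paper's requirement that $p\in\operatorname{cl}_{\beta X}Z$; the identification $\beta X'=\operatorname{cl}_{\beta X}X'$ via $C^{*}$-embedding of closed subsets of the normal space $X$, and the uniqueness of Stone--\v{C}ech extensions, are both used legitimately. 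No gaps.
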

\begin{proof}
Obviously $\operatorname{cl}_{\beta X}\operatorname{Fix}(f) \subset \operatorname{Fix}(\widetilde{f})$. 
Suppose that there is $p \in \operatorname{Fix}(\widetilde{f}) \setminus \operatorname{cl}_{\beta X}\operatorname{Fix}(f)$. 
Choose a functionally closed set $Z \subset X$ such that $p \in \operatorname{cl}_{\beta X}Z$ and $Z \cap \operatorname{Fix}(f) = \emptyset$. 
Let $\varphi \colon X \to [0,\infty)$ be a function such that $\varphi^{-1}(0) = Z$. Consider the map 
$g \colon X \times [0,\infty) \to X \times [0,\infty)$ defined by letting
$g(x,r) = (f(x),r + \varphi(x))$, $(x,r) \in X \times [0,\infty)$, and note that $g$ has no fixed points. 
The product $X \times [0,\infty)$ is still finite-dimensional, locally compact and paracompact. Therefore, 
by Theorem \ref{THEOREM:EQUIVALENCE}, $\widetilde{g} \colon \beta(X\times [0,\infty)) \to \beta (X \times [0,\infty))$ 
has no fixed points. On the other hand, identifying $Z$ with $Z \times \{ 0\}$, we see that $f|Z = g|Z$ and consequently 
$\widetilde{g}(p) = \widetilde{f}(p) = p$. This contradiction completes the proof. 
\end{proof}

The next observation shows that assumption of closedness of $X$ (in $Y$) in Theorem \ref{THEOREM:PARACOMPACT} 
(as well as in Propositions \ref{P:chromatic} and \ref{P:lc}) cannot be dropped.

\begin{pro}\label{P:open}
There exists a continuous non-colorable fixed-point free map $g \colon X \to Y$, defined on an open subspace $X$ of a 
zero-dimensional metrizable compactum $Y$.
\end{pro}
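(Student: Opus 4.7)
The strategy is to transplant Mazur's non-colorable fixed-point free self-map $m \colon P \to P$ of the irrationals into an open subspace of a zero-dimensional metric compactum. A subtlety is that $X$, being open in a compact metric space, is automatically locally compact, separable, and zero-dimensional, hence sigma-compact; by the Kulesza--Sepulcri result cited in the Introduction, every fixed-point free \emph{self-map} of such an $X$ is colorable. So the non-colorability of our $g$ must come from the fact that $g$ is allowed to send some points of $X$ into the boundary $Y \setminus X$.

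For a concrete realization I would take $Y = (\omega+1)^{\omega}$ (a zero-dimensional metric compactum) and note that $P = \omega^{\omega}$ sits inside $Y$ as a natural $G_{\delta}$-subspace, with Mazur's map $m \colon P \to P$ already available. I would then pick an open set $X \subset Y$ containing $P$ --- for instance something of the form $X = \{x \in Y \colon x_{0} \neq \infty\}$, or a similar open refinement obtained from the proof of Kuratowski's theorem --- chosen so that $m$ admits a continuous extension $g \colon X \to Y$. Since $P$ is a $G_{\delta}$ in the Polish space $X$ and $Y$ is Polish, Kuratowski's $G_{\delta}$-extension theorem produces a continuous extension of $m$ on some $G_{\delta}$ superset of $P$ in $X$, and by shrinking $X$ inside $Y$ if necessary (while keeping $X$ open and $P \subset X$) one can assume the extension is defined on all of $X$. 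On the clopen Cantor-like pieces constituting $X \setminus P$ one then perturbs $g$, one piece at a time, to eliminate any fixed points accidentally created; this is possible because each piece is a zero-dimensional absolute extensor in dimension zero and perturbations on disjoint clopen pieces preserve global continuity.

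Non-colorability of $g$ follows by a standard restriction argument. If $\{F_{1},\dots,F_{k}\}$ were a finite closed cover of $X$ with $g(F_{i}) \cap F_{i} = \emptyset$ for each $i$, then $\{F_{i} \cap P\}$ would be a finite cover of $P$ by sets closed in the subspace topology of $P$, and
\[
m(F_{i} \cap P) \;=\; g(F_{i} \cap P) \;\subset\; g(F_{i}),
\]
so $m(F_{i} \cap P) \cap (F_{i} \cap P) \subset g(F_{i}) \cap F_{i} = \emptyset$. This is a finite coloring of $m$, contradicting Mazur's non-colorability.

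The main obstacle is the extension step. Because the irrationals $P$ are not locally compact, $P$ cannot be a closed subspace of the locally compact $X$, so the closed-subspace extension machinery underlying Theorem~\ref{THEOREM:PARACOMPACT} is unavailable; one must replace it with Kuratowski's $G_{\delta}$-extension theorem and exploit the ultrametric/zero-dimensional structure of $(\omega+1)^{\omega}$ to carry out local perturbations that eliminate any spurious fixed points introduced in $X \setminus P$. Verifying simultaneously that $g$ is continuous and fixed-point free on all of $X$ --- while keeping $g|_{P} = m$ --- is the delicate part of the construction.
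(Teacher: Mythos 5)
Your overall strategy --- realize Mazur's map $m\colon P\to P$ as the restriction to $P$ of a map defined on an open superset of $P$ in a zero-dimensional metric compactum, and then derive non-colorability of $g$ by restricting a hypothetical coloring back to $P$ --- is the right one, and your closing restriction argument is exactly what makes the paper's example work (your opening remark about why $g$ cannot be a self-map is also correct, though the cited result is due to Krawczyk--Stepr\={a}ns, not ``Kulesza--Sepulcri''). However, the construction of the extension has two genuine gaps. First, fixing $Y=(\omega+1)^{\omega}$ in advance and invoking the Kuratowski--Lavrentiev theorem only produces a continuous extension of $m$ to some $G_{\delta}$ set $G\supset P$. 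Since $P$ is dense in $Y$ with dense complement, $P$ has empty interior, and nothing prevents $G$ from being equal to $P$ itself; hence there need not exist any open $X$ with $P\subset X\subset G$, and ``shrinking $X$ while keeping it open and containing $P$'' cannot repair this. Second, the perturbation step is unavailable: $P$ is dense in every open $X\supset P$, so $X\setminus P$ is \emph{not} a union of sets clopen in $X$, and any modification of $g$ at a point of $X\setminus P$ lying in the closure of $P$ conflicts with the prescribed values $g|_P=m$. Worse, no perturbation scheme can eliminate all fixed points of an extension defined on an entire compactification of $P$: non-colorability of $m$ is equivalent to the statement that $\widetilde m\colon\beta P\to\beta P$ has a fixed point, so every continuous extension of $m$ to a compactification of $P$ has fixed points. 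Spurious fixed points cannot be perturbed away; they must be excised from the domain.

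The paper resolves both difficulties at once by choosing the compactification \emph{after} the map: since $\widetilde m\colon \beta P\to\beta P$ fixes a point, one factors it through a zero-dimensional metrizable compactification $Y$ of $P$ carrying a continuous $\widetilde f\colon Y\to Y$ with $\widetilde f|_P=m$ and $\operatorname{Fix}(\widetilde f)\neq\emptyset$, so no extension theorem for the pre-assigned $Y=(\omega+1)^{\omega}$ is needed. Because $m$ is fixed-point free, $\operatorname{Fix}(\widetilde f)$ (and likewise $\widetilde f^{-1}(\operatorname{Fix}(\widetilde f))$) is a closed set disjoint from $P$; deleting it from $Y$ leaves an open set $X\supset P$ on which $g=\widetilde f|_X\colon X\to Y$ is fixed-point free, and your restriction argument then yields non-colorability. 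If you replace the Lavrentiev-plus-perturbation construction by this factorization-and-excision step, the proof is complete.
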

\begin{proof}
Let $f \colon P \to P$ be a continuous non-colorable fixed-point free self-map of 
the space $P$ of irrational numbers. Since the extension of $f$ onto the Stone-\v{C}ech 
compactification $\beta P$ of $P$ has fixed points, there exist a zero-dimensional metrizable 
compactification $Y$ of $P$ and a map $\widetilde{f} \colon Y \to Y$ such that $\widetilde{f}|P = f$ 
and $\operatorname{Fix}(\widetilde{f}) \not= \emptyset$. Set $X = \widetilde{f}^{-1}(\operatorname{Fix}(\widetilde{f}))$ and 
$g = \widetilde{f}|X$.
\end{proof} 

The following statement can be considered as a converse (among Polish spaces) of \cite[Theorem 3.3]{KS}.

\begin{pro}\label{P:iff}
Let $X$ be a zero-dimensional Polish space. Then the following conditions are equivalent:
\begin{enumerate}
  \item Every continuous fixed-point free self-map of $X$ is colorable;
  \item $X$ is sigma-compact.
\end{enumerate}
\end{pro}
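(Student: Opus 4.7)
The implication (2)$\Rightarrow$(1) is precisely the Koyama--Segal result quoted in the introduction as fact (b) (\cite{KS}): for a zero-dimensional sigma-compact space $X$, every continuous fixed-point free self-map has a fixed-point free Stone--\v Cech extension, which for normal $X$ is equivalent to colorability.

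For (1)$\Rightarrow$(2) I would argue the contrapositive: if $X$ is a zero-dimensional Polish space that is not sigma-compact, I will produce a continuous fixed-point free non-colorable self-map. By Hurewicz's theorem, every non-sigma-compact Polish space contains a closed copy of the irrationals, so I fix a closed $P_{0}\subset X$ homeomorphic to $N_{0}^{1}$. Transport Mazur's fixed-point free non-colorable self-map to a map $f\colon P_{0}\to P_{0}$. The idea is to transport $f$ to $X$ via a retraction $r\colon X\to P_{0}$ and set $g:=f\circ r\colon X\to X$.

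Given such a retraction, the verification is routine. Fixed-point freeness: $g(X)\subset P_{0}$, so for $x\notin P_{0}$ we have $g(x)\ne x$ automatically, and for $x\in P_{0}$ we have $g(x)=f(x)\ne x$ since $r$ fixes $P_{0}$ pointwise. Non-colorability of $g$: if $\{F_{1},\dots,F_{k}\}$ were a closed coloring of $g$, then $\{F_{i}\cap P_{0}\}_{i=1}^{k}$ would be a closed cover of $P_{0}$ satisfying $f(F_{i}\cap P_{0})=g(F_{i}\cap P_{0})\subset g(F_{i})$, which is disjoint from $F_{i}$ and hence from $F_{i}\cap P_{0}$; this would color $f$, contradicting the choice of $f$.

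The only genuine obstacle is the existence of the retraction $r\colon X\to P_{0}$. I would invoke the fact that every zero-dimensional Polish space is an $AE(0)$, so the identity map $P_{0}\to P_{0}$ extends across the zero-dimensional normal space $X$. If one prefers a hands-on argument, $X$ is separable, metrizable and zero-dimensional, so the open set $X\setminus P_{0}$ admits a countable disjoint clopen partition $\{V_{j}\}$ with $\operatorname{diam}(V_{j})\le\tfrac12 d(V_{j},P_{0})$; choosing $a_{j}\in P_{0}$ within distance $2d(V_{j},P_{0})$ of $V_{j}$ and setting $r\equiv a_{j}$ on $V_{j}$ and $r=\mathrm{id}$ on $P_{0}$, the triangle inequality shows $d(r(x),x)\to 0$ as $d(x,P_{0})\to 0$, giving continuity. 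This completes the plan.
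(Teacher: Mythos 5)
Your argument is correct, and its skeleton is the same as the paper's: locate a closed copy of the irrationals inside $X$, retract $X$ onto it, and compose the retraction with Mazur's non-colorable fixed-point free map; the verification of fixed-point freeness and non-colorability of $g=f\circ r$ is exactly as you describe. The one step you handle differently is how the closed copy of the irrationals is produced. You invoke Hurewicz's dichotomy (a Polish space is either sigma-compact or contains a closed copy of $N_{0}^{1}$), which is a clean, standard citation. The paper instead constructs a specific such subspace by hand: it iterates the operation $LC(\cdot)$ (union of all open compact subsets) transfinitely, shows the increasing union $X^{\beta}$ of removed pieces is sigma-compact and stabilizes below $\omega_{1}$, and observes that the closed remainder $Y=X\setminus X^{\beta}$ is a nonempty, zero-dimensional, completely metrizable space with no points of local compactness, hence homeomorphic to the irrationals by the Alexandrov--Urysohn characterization. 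Your route is shorter but leans on a deeper imported theorem; the paper's is self-contained modulo the classical characterization of the irrationals. On the other hand, you justify the existence of the retraction $r$ (via the $AE(0)$ property of completely metrizable zero-dimensional spaces, or the explicit clopen-partition construction), which the paper simply asserts; your hands-on construction is sound. Two small points: the reference \cite{KS} is to Krawczyk and Stepr\={a}ns, not Koyama--Segal, though the citation itself and the fact you extract from it (which for normal, in particular Polish, $X$ is equivalent to colorability) are the right ones for $(2)\Rightarrow(1)$; and in your partition argument you should note that each $V_{j}$, being clopen in $X\setminus P_{0}$ and at positive distance from $P_{0}$, is in fact clopen in $X$, which is what makes $r$ locally constant off $P_{0}$.
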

\begin{proof}
$(1) \Longrightarrow (2)$. By $LC(X)$ denote the union of all open compact subsets of $X$. 
Let $X^{0} = LC(X)$, $X^{\alpha +1} = X^{\alpha} \cup LC(X\setminus X^{\alpha})$ and 
$X^{\alpha} = \cup \{ X^{\beta} \colon \beta < \alpha\}$ for a limit ordinal. Note that 
$\{ X^{\alpha} \colon \alpha < \omega_{1}\}$ is an increasing union of open subsets of $X$ 
and consequently $X^{\beta} = X^{\alpha}$ for some $\beta < \omega_{1}$ and all $\alpha > \beta$. 
Clearly $Y = X\setminus X^{\beta}$ has no points of local compactness. Note also that $X^{\beta}$ 
is sigma-compact. Assuming that $X$ is not sigma-compact, it follows that $Y \not= \emptyset$. Since $Y$ 
is completely metrizable (as a closed subspace of $X$) and zero-dimensional, we conclude that $Y$ is 
homeomorphic to the space of irrational numbers.
Let $g \colon Y \to Y$ be a continuous non-colorable fixed-point free map (Mazur's example) and $r \colon X \to Y$ be a retraction. 
Then the composition $f = g\circ r \colon X \to X$ is a non-colorable fixed-point free map contradicting (1).

$(2) \Longrightarrow (1)$. This follows from \cite{KS}.
\end{proof}

\par\bigskip
Our main theorem for locally compact Lindel\"of spaces of finite dimension and the Krawczyk-Stepr\={a}ns result for
zero-dimensional $\sigma$-compact spaces \cite[Theorem 3.3]{KS} motivate the following
question.

\par\bigskip\noindent
\begin{que}\label{Q:generalizeKSandRCH}
Let $X$ be a $\sigma$-compact space of finite dimension.
Is every continuous fixed-point free self-map on $X$ colorable?
\end{que}

\end{document}